\newcommand{\xkh}[1]{\left(#1\right)}
\newcommand{\dkh}[1]{\left\{#1\right\}}
\newcommand{\zkh}[1]{\left[#1\right]}
\newcommand{\nj}[1]{\left \langle {#1} \right \rangle}
\newcommand{\norm}[1]{\|{#1}\|_2}
\newcommand{\normf}[1]{\|{#1}\|_F}
\newcommand{\normone}[1]{\|{#1}\|_1}
\newcommand{\norms}[1]{\|{#1}\|}
\newcommand{\abs}[1]{\lvert#1\rvert}
\newcommand{\Abs}[1]{\left\lvert#1\right\rvert}
\newcommand{\PC}[2]{{\mathbb P} \left({#1} \: | \: {#2} \right)}
\newcommand{\A}{{\mathcal A}}
\newcommand{\E}{{\mathbb E}}
\newcommand{\PP}{{\mathbb P}}
\newcommand{\1}{{\mathds 1}}
\newcommand{\R}{{\mathbb R}}
\newcommand{\T}{\top}
\newcommand{\C}{{\mathbb C}}
\newcommand{\Cn}{{\mathbb C}^n}
\newcommand{\vx}{{\bm x}}
\newcommand{\vy}{{\bm y}}
\newcommand{\vu}{{\bm u}}
\newcommand{\vw}{{\bm w}}
\newcommand{\vz}{{\bm z}}
\newcommand{\vf}{{\bm f}}
\newcommand{\vd}{{\bm d}}
\newcommand{\md}{{\bm D}}
\newcommand{\mx}{{\bm X}}
\newcommand{\mQ}{{\bm Q}}
\newcommand{\vb}{{\bm b}}
\newcommand{\va}{{\bm a}}
\newcommand{\Z}{{\bm Z}}
\newcommand{\Y}{{\bm Y}}
\newcommand{\rank}{{\rm rank}}
\newcommand{\tr}{{\rm tr}}
\newcommand{\diag}{{\rm diag}}
\newtheorem{definition}{Definition}[section]
\newtheorem{theorem}[definition]{Theorem}
\newtheorem{lemma}[definition]{Lemma}
\newtheorem{assumption}[definition]{Assumption}
\newtheorem{remark}[definition]{Remark}
\newtheorem{proposition}[definition]{Proposition}
\numberwithin{equation}{section}
\begin{document}

\author{Meng Huang}
\address{School of Mathematical Sciences, Beihang University, Beijing, 100191, China} 
\email{menghuang@buaa.edu.cn}

\author{Jinming~Wen}
\address{Department of Mathematics, Jilin University, Changchun, China}
\email{jinming.wen@mail.mcgill.ca}

\author{Ran~Zhang}
\address{Department of Mathematics, Jilin University, Changchun, China}
\email{zhangran@jlu.edu.cn}

\subjclass[2020]{Primary 94A12, 62H12; Secondary 90C22} 

\date{}


\keywords{Phase retrieval, Estimation performance, Phaselift, Minimax optimality, Coded diffraction patterns}

\title[Recovery bound of PhaseLift from CDP]{Recovery Performance of PhaseLift for Phase Retrieval from Coded Diffraction Patterns}

\maketitle

\begin{abstract}
The PhaseLift algorithm is an  effective convex method for solving the phase retrieval problem from Fourier measurements with coded diffraction patterns (CDP). While exact reconstruction guarantees are well-established in the noiseless case, the stability of recovery under noise remains less well understood. In particular, when the measurements are corrupted by an additive noise vector $\vw \in \R^m$, existing recovery bounds scale on the order of $\norm{\vw}$, which is conjectured to be suboptimal. More recently, Soltanolkotabi conjectured that the optimal PhaseLift recovery bound should scale with the average noise magnitude, that is, on the order of $\norm{\vw}/\sqrt m$. However, establishing this theoretically  is considerably more challenging and has remained an open problem. In this paper, we focus on this conjecture and  prove that under adversarial noise, the recovery error of PhaseLift is bounded by $O\xkh{ \sqrt{\frac{\norm{\vw}\log n }{\sqrt m}}}\norm{\vx_0}$. Here, $\vx_0 \in \C^n$ is the signals we aim to recover. Moreover, for mean-zero sub-Gaussian noise vector $\vw \in \R^m$, a upper error bound and its corresponding minimax lower bound are also provided. Our results represent a significant step toward Soltanolkotabi's conjecture, offering new insights into the stability of PhaseLift under noisy CDP measurements.
\end{abstract}

%

\section{Introduction}
\subsection{Problem setup} \label{sec:probset}
Let $\vx_0 \in \Cn$ be an arbitrary unknown vector. The Fourier phase retrieval problem aims to recover $\vx_0$ from the modulus of its Fourier transform:
\[
y_k = \Abs{\sum_{t=0}^{n-1} x_0(t) e^{-\frac{2\pi i k t}{N}}}^2, \quad k = 1, \ldots, N-1,
\]
where $N \ge 2n-1$. This problem is equivalent to recovering $\vx_0$ from its auto-correlation, which is generally ill-posed \cite{beinert2018,bendory,edidin2019,huangJoC}. In fact, for a given signal dimension $n$, besides the trivial ambiguities caused by shift, conjugate reflection, and rotation, there can be up to $2^{n-2}$ nontrivial solutions. To address this issue, a popular approach to guarantee the uniqueness of recovery is to utilize multiple masks that introduce redundancy into the acquired data. This setup is called the {\em coded diffraction pattern} (CDP) model \cite{CDPLi,CDPGross}, and the noisy measurements we obtained are
\begin{equation*}
y_{k,l} = \Abs{\sum_{t=0}^{n-1} x_0(t) \bar{d}_l(t) e^{-\frac{2\pi i k t}{n}}}^2 + w_{k,l}, \quad 1 \le k \le n, \; 1 \le l \le L,
\end{equation*}
where $\vd_l \in \C^n$ are $L$ known masks and $w_{k,l}$ are noises. Here, $\bar{d}_l(t)$ denotes the conjugate of $d_l(t)$ for any $t=1,\ldots,m$.
 In matrix notation, this model can be written as
\begin{equation} \label{eq:measu}
y_{k,l} = \Abs{\nj{\md_l \vf_k, \vx_0}}^2 + w_{k,l}, \quad 1 \le k \le n, \; 1 \le l \le L,
\end{equation}
where $\vf_k^* \in \Cn$ is the $k$th row of the discrete Fourier transform (DFT) matrix, and $\md_l = \diag(\vd_l) \in \C^{n \times n}$ is a diagonal matrix containing the entries of the $l$th mask. 

There are several experimental techniques to generate masked Fourier measurements in optical setups, such as inserting a mask or a phase plate after the object \cite{liu2008phase,Loewen}.  Fourier phase retrieval has attracted significant attention across diverse fields of physical science and engineering, including X-ray crystallography \cite{Xcry1,Xcry2}, astronomy \cite{astronomy}, diffraction imaging \cite{shechtman2015phase,chai2010array}, microscopy \cite{miao2008extending}, as well as optics and acoustics \cite{walther1963question, balan2010signal, balan2006signal}, where the detector records only the diffracted intensity while the phase information is lost.

To recover $\vx_0$ from \eqref{eq:measu}, a commonly used and effective convex program, known as {\em PhaseLift}, was proposed \cite{CDPLi,CDPGross}. Specifically, let $\mx_0 = \vx_0 \vx_0^* \in \C^{n \times n}$. Then the measurements \eqref{eq:measu} can be rewritten as
\begin{equation} \label{eq:measurmatx}
y_{k,l} = \nj{\md_l \vf_k \vf_k^* \md_l^*, \mx_0} + w_{k,l}, \quad 1 \le k \le n, \; 1 \le l \le L,
\end{equation}
where,  $\nj{\cdot,\cdot}$ denotes the Hilbert–Schmidt inner product which is defined as $\nj{\Y, \Z} = \tr(\Y \Z)$ for any  two Hermitian matrices $\Y, \Z$.
Then the  phase retrieval problem thus becomes recovering a rank-one matrix $\mx_0$ from $\vy = \{y_{k,l}\}_{1 \le k \le n, 1 \le l \le L}$.  
 For convenience, we define a linear map $\mathcal{A} : \mathcal{H}^{n \times n} \to \R^{nL}$ as
\begin{equation} \label{def:A}
\mathcal{A}(\mx) = \bigl\{\nj{\md_l \vf_k \vf_k^* \md_l^*, \mx}\bigr\}_{1 \le k \le n, 1 \le l \le L} = \bigl\{\tr\bigl(\md_l \vf_k \vf_k^* \md_l^* \mx\bigr)\bigr\}_{1 \le k \le n, 1 \le l \le L},
\end{equation}
where $\mathcal{H}^{n \times n}$ denotes the set of all Hermitian matrices. Let $\vw = \{w_{k,l}\}_{1 \le k \le n, 1 \le l \le L}$. Assuming $\norm{\vw} \le \tau$, the following PhaseLift convex program \cite{CDPLi,CDPGross} can be used to estimate  $\mx_0$ from \eqref{eq:measurmatx}:
\begin{equation} \label{mod:con}
\begin{array}{ll}
\min_{\mx \succeq \bm{0}} & \tr(\mx) \\[0.5em]
\text{subject to} & \norm{\mathcal{A}(\mx) - \vy} \le \tau.
\end{array}
\end{equation}
Here, $\mx \succeq \bm{0}$ denotes $\mx$ is a positive semidefinite matrix.

We are interested in the following question:

{\em What is the optimal error bound of the program \eqref{mod:con}, and can it be rigorously established?}

\subsection{Motivation} \label{sec:motiv}
Recovery guarantees for the PhaseLift algorithm  from masked Fourier measurements were first established in \cite{CDPLi}. They showed that in the noiseless case, i.e., $\vw = \bm{0}$, if the masks are chosen at random and the number of masks satisfies $L \ge O(\log^4 n)$, then with high probability the solution to \eqref{mod:con} exactly recovers the target signal $\mx_0:=\vx_0\vx_0^*$. Later, Gross et al. improved this result to require only $O(\log^2 n)$ masks \cite{CDPGross}. 

In the presence of noise, the first stability result was given by Soltanolkotabi in \cite{Mahdithesis}. Specifically, they considered the convex program
\begin{equation} \label{mod:mahdi}
\min_{\mx \succeq \bm{0}} \quad \norm{\mathcal{A}(\mx) - \vy},
\end{equation}
and showed that when the masks are chosen randomly and $L \ge O(\log^4 n)$, with high probability the solution $\hat{\mx}$ to \eqref{mod:mahdi} with $\vy = \mathcal{A}(\mx_0) + \vw$ satisfies
\begin{equation} \label{bound:mahdi}
\normf{\hat{\mx} - \mx_0} \le C \norm{\vw},
\end{equation}
where $C > 0$ is a fixed numerical constant.

 In \cite{jaganathan2015phase}, the authors investigated two specific masks and assumed that each measurement is contaminated by bounded noise, i.e., $w_{k,l} \le \varepsilon$. They suggested estimating the signal by solving the following convex program:
\begin{equation} \label{mod:Eldar}
\min_{\mx \succeq \bm{0}} \quad \tr(\mx) \quad \text{subject to} \quad \norms{\mathcal{A}(\mx) - \vy}_{\infty} \le \varepsilon.
\end{equation}
They proved that if the signal $\vx_0 \in \Cn$ satisfies $\normone{\vx_0} \le \beta$ and $|x_0[0]| \ge \gamma $ for some constants $\beta,\gamma >0$, then the solution $\hat{\mx}$ to \eqref{mod:Eldar} obeys
\begin{equation} \label{eq:errEldar}
\normf{\hat{\mx} - \mx_0} \le C(\beta, \gamma) \varepsilon,
\end{equation}
where $C(\beta, \gamma)$ is a numerical constant. Although the error bound \eqref{eq:errEldar} is much smaller than the bound \eqref{bound:mahdi}, however, it requires additional conditions on $\vx_0$ and   assumptions on the noise vector.

More recently, \cite{li2021phase} revisited the problem and showed that if $L \ge O(\log^2 n)$ and $\norm{\vw} \le \tau$, then with high probability the solution $\hat{\mx}$ to \eqref{mod:con} satisfies
\begin{equation} \label{mod:li}
\normf{\hat{\mx} - \mx_0} \le C \sqrt{\log n} \cdot \tau.
\end{equation}
It is worth noting that  for Gaussian random measurements, the PhaseLift recovery bound  is known to be \cite{Phaseliftn}
\begin{equation} \label{eq:conjecture}
\normf{\hat{\mx} - \mx_0} \le C \frac{\norm{\vw}}{\sqrt{nL}}.
\end{equation}
Comparing this with the results \eqref{bound:mahdi} and \eqref{mod:li} for the CDP model, there is a significant gap. As stated on page 173 of \cite{Mahdithesis}, Soltanolkotabi {\em conjectures} that the optimal PhaseLift bound for the CDP model should be the Gaussian-like bound \eqref{eq:conjecture}, and further note that ``establishing this conjecture is a very interesting open problem and is significantly challenging."

Performance bounds of the PhaseLift algorithm have also been studied in blind deconvolution \cite{Ahmed,jung2017blind,Ling} and matrix completion \cite{Klopp,Koltchinskii}, where the noise bounds exhibit seemingly suboptimal dimension factors. Recently, Krahmer and Stöger \cite{krahmer2021convex} proved that the dimension factors in the noise bounds cannot be removed when the noise level is small. Since phase retrieval from the CDP model shares many similarities with blind deconvolution, it is natural to ask whether the noise bound \eqref{eq:conjecture} can be achieved here.
In this paper, we aim to establish a nearly sharp error bound for \eqref{mod:con} that positively resolves Soltanolkotabi's conjecture up to a logarithmic factor.

\subsection{Related work}
The phase retrieval problem, which aims to recover $\vx_0 \in \C^n$ from phaseless amplitude measurements
\[
y_k = \Abs{\nj{\va_k, \vx_0}}^2, \quad k = 1, \ldots, m,
\]
has recently been the subject of intensive research. Here, $\va_1, \ldots, \va_m \in \Cn$ are known measurement vectors. It has been shown theoretically that $m \ge 4n - 4$ generic measurement vectors suffice to recover $\vx_0$ up to a global phase in the complex case \cite{conca2015algebraic}, and $m \ge 2n - 1$ are sufficient in the real case \cite{balan2006signal}. 
Many algorithms with provable performance guarantees have been developed to solve the phase retrieval problem. These methods can be broadly categorized into two groups: those based on random measurement vectors for theoretical analysis, and those based on structured measurements motivated by practical applications.

\subsubsection{Phase retrieval based on random measurements}
For measurement vectors drawn independently at random from a Gaussian distribution, it has been shown that when the number of measurements satisfies $m \ge O(n)$, many efficient algorithms can guarantee stable recovery with high probability. 
One line of research relies on a “matrix-lifting” technique, which lifts the phase retrieval problem into a low-rank matrix recovery problem, solved via convex relaxation. Such methods include PhaseLift \cite{phaselift,Phaseliftn}, PhaseCut \cite{Waldspurger2015}, among others. In particular, Candès et al. considered the PhaseLift algorithm
\begin{equation} \label{mod:con1}
\begin{array}{ll}
\min_{\mx \succeq \bm{0}} & \tr(\mx) \\[0.5em]
\text{subject to} & \norm{\mathcal{A}(\mx) - \vy} \le \tau.
\end{array}
\end{equation}
Here, with a slight abuse of notation, we denote $\mathcal{A}: \mathcal{H}^{n \times n} \to \R^m$ by $\mathcal{A}(\mx) = \{ \va_k^* \mx \va_k \}_{k=1}^m$. They showed in \cite{phaselift} that when $\va_k \in \Cn$, $k=1,\ldots,m$ are independent and identically distributed (i.i.d.) complex Gaussian random vectors and $m \ge O(n \log n)$, then with high probability the solution $\hat{\mx}$ to \eqref{mod:con1} with $\vy = \mathcal{A}(\mx_0) + \vw$ and $\norm{\vw} \le \tau$ satisfies
\begin{equation} \label{bound:mahdi1}
\normf{\hat{\mx} - \mx_0} \le C \tau,
\end{equation}
for some numerical constant $C > 0$. Later, in \cite{Phaseliftn}, Candès and Li proposed the following empirical loss minimization to estimate $\mx_0$:
\begin{equation} \label{mo:phaselift}
\min_{\mx \in \C^{d \times d}} \quad \normone{\mathcal{A}(\mx) - \vy} \quad \text{subject to} \quad \mx \succeq 0.
\end{equation}
They proved that when $\va_k \in \Cn$, $k=1,\ldots,m$ are i.i.d. complex Gaussian random vectors and $m \ge O(n)$, with high probability the solution $\hat{\mx}$ to \eqref{mo:phaselift} satisfies
\[
\normf{\hat{\mx} - \mx_0} \le C_0 \frac{\normone{\vw}}{m},
\]
for some numerical constant $C_0 > 0$. This substantially improves upon \eqref{bound:mahdi1}.

Due to the computational inefficiency of PhaseLift in large-scale problems, another line of research focuses on optimizing a non-convex loss function in the natural parameter space, achieving significantly improved computational performance via a technique known as {\em spectral initialization}. Notable examples include \cite{AltMin, WF, TAF, TWF, RWF, turstregion, waldspurger2018phase, cai2021solving, huangkaz,zhaoYB,caiIP,caiHIP}, among others. Regarding the stability of empirical risk minimization, a sharp recovery bound for a commonly used non-convex estimator under adversarial noise was provided in \cite{huangPerfm}, and nearly minimax error bounds under sub-Gaussian noise were established in \cite{Eldarstability, lecuMini}.
For comprehensive recent developments in the theory, algorithms, and applications of phase retrieval, we refer the readers to survey papers \cite{jaganathan2016phase, shechtman2015phase}.

\subsubsection{Phase retrieval from structured measurements}

Although the theory and algorithms for phase retrieval based on Gaussian measurements were well-developed, they are not applicable to many practical scenarios. In reality, practical applications often involve structured measurements, with CDP being a commonly used setup, as described in Section \ref{sec:probset}. 
The first recovery guarantees from masked Fourier measurements were established for polarization-based recovery with highly specific masks. In the noiseless case, they showed \cite{Bandeira} that $O(\log n)$ masks suffice for unique recovery. Later, Candès et al. studied masks chosen at random and solved the phase retrieval problem using the PhaseLift algorithm, proving that $O(\log^4 n)$ random masks are sufficient for exact reconstruction of signals with high probability \cite{CDPLi}. This bound was subsequently improved to $O(\log^2 n)$ by Gross et al \cite{CDPGross}. Using the standard coupon collector's argument, they also showed that the lower bound for the number of Rademacher masks with random erasures required to guarantee uniqueness of recovery is $O(\log n)$. 
However, due to the much more structured and less random nature of coded diffraction patterns compared to Gaussian designs, achieving the theoretically optimal bound of $O(\log n)$ for CDP remains an open problem. In the presence of noise, stability results for PhaseLift have been provided in \cite{Mahdithesis, jaganathan2015phase}, as discussed in Section \ref{sec:motiv}.
To improve computational efficiency, several non-convex algorithms based on spectral initialization have been developed for phase retrieval with masks, demonstrating strong empirical performance alongside rigorous theoretical guarantees (see \cite{WF, li2022,lili} for further details).

\subsection{Our contributions}  \label{sec:contri}
As mentioned previously, PhaseLift algorithms are efficient convex methods for solving phase retrieval from CDP. Recovery guarantees for the noiseless case were provided in \cite{CDPLi,CDPGross}, while several recovery error bounds for the noisy case have been established in \cite{Mahdithesis,li2021phase}. However, it is widely believed that these error bounds are far from optimal. Establishing a sharp recovery bound for the CDP model is considerably more challenging and remains an open problem \cite[p. 173]{Mahdithesis}.  The goal of this paper is to address this problem. Throughout this paper, we adopt the commonly used assumption that the masks are chosen at random, as stated below. 

\begin{assumption}[\cite{CDPLi,CDPGross}] \label{assump:mask}
The entries of each mask $\vd_l \in \C^n$ are i.i.d. copies of a complex random variable $d$ which is symmetric and satisfies
\begin{equation*}
\E d = 0, \quad \E d^2 = 0, \quad \E |d|^4 = 2 \bigl(\E |d|^2 \bigr)^2, \quad |d| \le M,
\end{equation*}
for some fixed constant $M > 0$. For convenience, we denote $\nu = \E |d|^2$.
\end{assumption}
As shown in \cite{Fannjiang}, random masks are physically realizable and particularly suitable for phase retrieval. An example of a random variable $d$ satisfying Assumption \ref{assump:mask}  is given by $d = b_1 b_2$ with
\begin{equation} \label{eq:octanary}
b_1 \sim \left\{
\begin{array}{rl}
1 & \text{with prob. } 1/4 \\
-1 & \text{with prob. } 1/4 \\
-i & \text{with prob. } 1/4 \\
i & \text{with prob. } 1/4
\end{array}
\right., \quad \text{and} \quad
b_2 \sim \left\{
\begin{array}{ll}
1/\sqrt{2} & \text{with prob. } 4/5 \\
\sqrt{3} & \text{with prob. } 1/5
\end{array}
\right.,
\end{equation}
which is referred to as octanary codes in \cite{CDPLi}. Our main results are stated as follows.

\begin{theorem} \label{mainresult}
Let $\vx_0 \in \Cn$ and $\omega \ge 1$. Suppose that the masks $\dkh{\vd_l}_{l=1}^L$ satisfy Assumption \ref{assump:mask}, and the number of masks $L$ satisfies $L \ge C_0 \log^2 n$ for some constant $C_0 > 0$ depending only on $M$ and $\nu$. For any noise vector $\vw \in \R^m$ with $\norm{\vw} \le \tau$, with probability at least $1 - e^{-\omega}$, the solution $\hat{\mx}$ to \eqref{mod:con} with $\vy = \mathcal{A}(\mx_0) + \vw$ satisfies
\begin{equation} \label{eq:newbo}
\normf{\hat{\mx} - \mx_0} \le \norm{\vx_0} \cdot \min\dkh{2\norm{\vx_0}, C\sqrt{\frac{ \tau \log n}{\sqrt{nL}}}}.
\end{equation}
Here,  $\mx_0 = \vx_0 \vx_0^*$, and $C > 0$ is a constant only depends on $M, \nu$.
\end{theorem}

\begin{remark}
In the noise regime where $\tau \gg \frac{\log n}{\sqrt{nL}} \norm{\vx_0}^2$, the  bound \eqref{eq:newbo} significantly improves over the best known result \eqref{bound:mahdi}. Furthermore, in the noise level regime where $\tau = c \norm{\vx_0}^2 \sqrt{nL}$ for some sufficiently small constant $c>0$, which is a practically relevant regime, the  bound \eqref{eq:newbo} becomes 
\[
\normf{\hat{\mx} - \mx_0} \le c' \sqrt{\log n}\norm{\vx_0}^2, 
\]
where $c'>0$ is a sufficiently small constant. This result nearly matches the bound conjectured by Soltanolkotabi  in  \eqref{eq:conjecture}. 
\end{remark}

\begin{remark} \label{re:advn}
Under the assumptions of Theorem \ref{lowbound}, using the same approach as in \cite{phaselift}, we can obtain an estimate $\hat{\vx} \in \C^n$ by finding the leading eigenvector corresponding to the largest eigenvalue of $\hat{\mx}$ such that
\[
\mbox{dist}(\hat\vx,\vx_0) \le C \min \left\{\norm{\vx_0},  \sqrt{\frac{ \tau \log n}{\sqrt{nL}}} \right\}.
\]
Here, the distance is defined as $\mbox{dist}(\hat\vx,\vx_0):=\min_{\phi \in [0, 2\pi)} \norm{\hat{\vx} - e^{i \phi} \vx_0}$, since we can recover $\vx_0$ up to a global phase.
\end{remark}

The following theorem shows that the error bound $\norm{\vw}/\sqrt{nL}$ is sharp up to a logarithmic factor.

\begin{theorem} \label{lowbound}
Let $\vx_0 \in \Cn$. Suppose that the masks $\dkh{\vd_l}_{l=1}^L$ satisfy Assumption \ref{assump:mask} and the number of masks $L$ satisfies $L \ge C_0 \log n$ for some constant $C_0 > 0$ depending only on $M$ and $\nu$. Then, with probability at least $1 - 4Ln^{-10}$, there exists a noise vector $\vw \in \R^{nL}$ and parameter $\tau \le \sqrt{nL \log n} \norm{\vx_0}^2$ such that the solution $\hat{\mx}$ to \eqref{mod:con} with $\vy = \mathcal{A}(\mx_0) + \vw$ and $\norm{\vw} \le \tau$ satisfies
\[
\normf{\hat{\mx} - \mx_0} \ge C_1 \frac{\tau}{\sqrt{nL \log n}}.
\]
Here, $C_1 > 0$ is a constant depending only on $M$, $\nu$.
\end{theorem}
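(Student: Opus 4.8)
The plan is to convert the desired Frobenius-norm lower bound into the much simpler task of exhibiting one feasible competitor with a quantified trace deficit. The enabling observation is that, since $\mx_0=\vx_0\vx_0^*$ has rank one, the constraint $\mx\succeq\bm 0$ prevents the trace from dropping quickly near $\mx_0$. First I would prove the deterministic lemma: if $\mx\succeq\bm 0$ and $\normf{\mx-\mx_0}\le r$, then $\tr(\mx)\ge\tr(\mx_0)-r$. Writing $\HH=\mx-\mx_0\succeq-\mx_0$ and noting that $-\mx_0$ has only one nonzero (negative) eigenvalue, Weyl's inequality gives $\lambda_{n-1}(\HH)\ge\lambda_{\min}(\mx)\ge 0$, so $\HH$ has at most one negative eigenvalue, of magnitude at most $\normf{\HH}\le r$; hence $\tr(\HH)\ge -r$. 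Granting this, if $\vw$ is any noise, $\tau=\norm{\vw}$, and $\mx_1$ is a feasible PSD competitor for \eqref{mod:con} with $\tr(\mx_1)\le\tr(\mx_0)-\delta$, then the minimizer $\hat{\mx}$ satisfies $\tr(\hat{\mx})\le\tr(\mx_1)\le\tr(\mx_0)-\delta$, and applying the lemma to $\hat{\mx}$ forces $\normf{\hat{\mx}-\mx_0}\ge\delta$. Thus it suffices to build a feasible competitor whose trace deficit satisfies $\delta\ge C_1\tau/\sqrt{m\log n}$.

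For the competitor I would take the signal-shrinking direction: set $\mx_1=(1-s)\mx_0$ for a small $s>0$ and choose the noise $\vw=-s\,\mathcal{A}(\mx_0)$, so that $\vy=\mathcal{A}(\mx_0)+\vw=\mathcal{A}(\mx_1)$ has zero residual and $\mx_1$ is automatically feasible with $\tau=\norm{\vw}=s\norm{\mathcal{A}(\mx_0)}$. Here $\mx_1\succeq\bm 0$, and the trace deficit is exactly $\delta=\tr(\mx_0)-\tr(\mx_1)=s\norm{\vx_0}^2$. Substituting into the reduction, the whole theorem comes down to the single estimate $\norm{\mathcal{A}(\mx_0)}\le C\sqrt{m\log n}\,\norm{\vx_0}^2$, since then $\delta=s\norm{\vx_0}^2\ge (s/C)\,\norm{\mathcal{A}(\mx_0)}/\sqrt{m\log n}=C^{-1}\tau/\sqrt{m\log n}$, which is the claimed bound.

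It remains to bound the sensing norm $\norm{\mathcal{A}(\mx_0)}^2=\sum_{k,l}\Abs{\nj{\md_l\vf_k,\vx_0}}^4$, and this is where the logarithmic factor and the per-mask probability budget enter. I would split it as $\norm{\mathcal{A}(\mx_0)}^2\le\bigl(\max_{k,l}\Abs{\nj{\md_l\vf_k,\vx_0}}^2\bigr)\cdot\sum_{k,l}\Abs{\nj{\md_l\vf_k,\vx_0}}^2$. The second factor is controlled deterministically by Parseval together with $\abs{d}\le M$: for each mask $\sum_k\Abs{\nj{\md_l\vf_k,\vx_0}}^2=n\sum_t\abs{x_0(t)}^2\abs{d_l(t)}^2\le nM^2\norm{\vx_0}^2$, so the sum over all $(k,l)$ is at most $mM^2\norm{\vx_0}^2$. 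The first factor is the crux: for each fixed mask $l$ the sequence $\{\nj{\md_l\vf_k,\vx_0}\}_k$ is a masked discrete Fourier transform of $\vx_0$, and I would show its maximum modulus squared is $O(\log n\cdot\nu\norm{\vx_0}^2)$ with probability at least $1-n^{-10}$ per mask, then union over the $L$ masks to obtain the overall probability $1-4Ln^{-10}$ (which is also why only $L\ge C_0\log n$ is required). Combining the two factors yields $\norm{\mathcal{A}(\mx_0)}\le C\sqrt{m\log n}\,\norm{\vx_0}^2$, completing the argument. The main obstacle is precisely this extreme-value estimate: the coordinates $\nj{\md_l\vf_k,\vx_0}$ are neither Gaussian nor independent across $k$ (they form a masked DFT of a fixed vector), so the tail bound $\PP[\max_k\Abs{\nj{\md_l\vf_k,\vx_0}}^2>t]$ at the level $t\asymp\log n\cdot\nu\norm{\vx_0}^2$ must be derived directly from the bounded-moment mask structure of Assumption \ref{assump:mask} — for instance via a sub-exponential tail for each individual coordinate followed by a union bound over the $n$ frequencies — rather than from off-the-shelf Gaussian maxima.
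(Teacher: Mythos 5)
Your proposal is correct, and its core is the same as the paper's: choose noise proportional to $-\A(\mx_0)$ so that a shrunken multiple of $\mx_0$ becomes feasible, then reduce everything to the estimate $\norm{\A(\mx_0)}\le C\sqrt{m\log n}\,\norm{\vx_0}^2$, which both you and the paper prove by splitting $\sum_{k,l}\abs{\vf_k^*\md_l^*\vx_0}^4\le\bigl(\max_{k,l}\abs{\vf_k^*\md_l^*\vx_0}^2\bigr)\sum_{k,l}\abs{\vf_k^*\md_l^*\vx_0}^2$ and controlling the maximum by Hoeffding plus a union bound over all $nL$ coefficients. There are two execution-level differences. First, the paper takes your construction with $s=1$: then $\vy=\bm 0$, and since a positive semidefinite matrix with zero trace must be the zero matrix, the minimizer is exactly $\hat\mx=\bm 0$, so the error equals $\normf{\mx_0}$ with no further argument; your Weyl-inequality trace-deficit lemma is correct (and is a nice, more robust device, since it handles any feasible competitor and any $s$), but it is machinery the paper does not need. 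Second, and to your credit, you bound the quadratic factor deterministically via Parseval and $\abs{d}\le M$, getting $\sum_{k,l}\abs{\vf_k^*\md_l^*\vx_0}^2\le nLM^2\norm{\vx_0}^2$, whereas the paper writes this sum as $n\sum_l\vx_0^*\md_l\md_l^*\vx_0$ and invokes Bernstein's inequality to get the bound $2\nu nL\norm{\vx_0}^2$ with probability $1-2n^{-10}$ --- and that Bernstein step is the \emph{only} place the paper uses the hypothesis $L\ge C_0\log n$. Your route trades the constant $2\nu$ for $M^2$ but removes a probabilistic step and a failure event entirely; in particular, your parenthetical claim that the union bound over masks ``is why only $L\ge C_0\log n$ is required'' is off --- your argument in fact never uses that hypothesis at all, which only strengthens the result.
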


For some practical applications, the measurements are contaminated by Gaussian noise. In this case, a tighter error bound can be established. For Gaussian noise, we consider the following convex program:
\begin{equation} \label{model:gau}
\min_{\mx \succeq \bm{0}} \quad \norm{\mathcal{A}(\mx) - \vy} \quad \text{subject to} \quad \tr(\mx) \le R,
\end{equation}
where $R>0$ is a parameter which specifies the desired rank level of the solution. The following theorem presents the estimation performance of the program \eqref{model:gau}.

\begin{theorem} \label{UpperGaussian}
Let $\vx_0 \in \Cn$ and $\omega \ge 1$. Suppose that the masks $\dkh{\vd_l}_{l=1}^L$ satisfy Assumption \ref{assump:mask}  and the number of masks $L$ satisfies $L \ge C_0 \,\omega \log^2 n$ for some constant $C_0 > 0$ depending only on $M$ and $\nu$. Assume that the noise $\vw \in \R^m$ is a mean-zero sub-Gaussian random vector with $\norms{\vw}_{\psi_2} \le \sigma$ for some constant $\sigma > 0$. Then, with probability at least $1 - e^{-\omega}$, the solution $\hat{\mx}$ to \eqref{model:gau} with $\vy = \mathcal{A}(\mx_0) + \vw$ and $R = \norm{\vx_0}^2$ satisfies
\[
\normf{\hat{\mx} - \mx_0} \le C_2 \norm{\vx_0} \sqrt{\frac{ \sigma  \log^2 n}{\sqrt{L}}}.
\]
Here, $C_2 > 0$ is a constant depending on $\nu$, $M$.
\end{theorem}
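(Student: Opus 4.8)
The plan is to run a descent-cone analysis for the constrained least-squares program \eqref{model:gau}, but to replace the crude deterministic bound $\norm{\mathcal{A}(\bm H)}\le 2\norm{\vw}$ (which only reproduces the adversarial rate of Theorem \ref{mainresult}) by a sharper basic inequality that exposes the inner product $\langle\bm H,\mathcal{A}^*(\vw)\rangle$, and then to exploit the randomness of $\vw$ through a concentration bound on the random matrix $\mathcal{A}^*(\vw)$. First, since $\tr(\mx_0)=\norm{\vx_0}^2=R$, the target $\mx_0$ is feasible for \eqref{model:gau}, so optimality of $\hat\mx$ gives $\norm{\mathcal{A}(\hat\mx)-\vy}\le\norm{\mathcal{A}(\mx_0)-\vy}=\norm{\vw}$. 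Writing $\bm H:=\hat\mx-\mx_0$ and using $\vy=\mathcal{A}(\mx_0)+\vw$, I expand $\norm{\mathcal{A}(\bm H)-\vw}^2\le\norm{\vw}^2$ to obtain the key estimate
\[
\norm{\mathcal{A}(\bm H)}^2\le 2\langle\mathcal{A}(\bm H),\vw\rangle=2\langle\bm H,\mathcal{A}^*(\vw)\rangle,\qquad \mathcal{A}^*(\vw)=\sum_{k,l}w_{k,l}\,\md_l\vf_k\vf_k^*\md_l^*.
\]

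Next I set up the cone condition at $\mx_0=\vx_0\vx_0^*$. Let $u=\vx_0/\norm{\vx_0}$, $P=uu^*$, let $T=\{u\vv^*+\vv u^*:\vv\in\Cn\}$ be the tangent space with projection $P_T$, and decompose $\bm H=\bm H_T+\bm H_{T^\perp}$. Because $\hat\mx\succeq\bm 0$ and $(I-P)\mx_0(I-P)=\bm 0$, the normal part $\bm H_{T^\perp}=(I-P)\hat\mx(I-P)$ is positive semidefinite. Feasibility gives $\tr(\bm H)=\tr(\hat\mx)-R\le 0$, and since $\tr(\bm H_T)=u^*\bm H u$ one gets $\tr(\bm H_{T^\perp})\le -\tr(\bm H_T)\le\normf{\bm H_T}$, hence the cone inequality $\normf{\bm H}\le\sqrt 2\,\normf{\bm H_T}$. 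Splitting the right-hand side of the basic inequality and using $\bm H_{T^\perp}\succeq\bm 0$,
\[
\langle\bm H,\mathcal{A}^*(\vw)\rangle\le\normf{\bm H_T}\,\|P_T\mathcal{A}^*(\vw)\|_F+\tr(\bm H_{T^\perp})\,\|{\mathcal{A}^*(\vw)}\|_{\mathrm{op}}\le\normf{\bm H_T}\,(\alpha+\beta),
\]
where $\alpha:=\|P_T\mathcal{A}^*(\vw)\|_F$ and $\beta:=\|{\mathcal{A}^*(\vw)}\|_{\mathrm{op}}$.

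For the lower bound I invoke the restricted lower isometry on the descent cone that underlies Theorem \ref{mainresult}: for $L\ge C_0\log^2 n$, with high probability $\norm{\mathcal{A}(\bm H)}\ge c\,\tfrac{\sqrt m}{\log n}\,\normf{\bm H}$ for every $\bm H$ satisfying the cone condition. Combining this with the two displays and $\normf{\bm H_T}\le\normf{\bm H}$ yields
\[
\frac{c^2 m}{\log^2 n}\,\normf{\bm H}^2\le 2\,\normf{\bm H}\,(\alpha+\beta),\qquad\text{so}\qquad\normf{\bm H}\lesssim\frac{\mathrm{polylog}(n)}{m}\,(\alpha+\beta).
\]
Since $m=nL$, the claimed rate $\sigma\,\log^2 n/\sqrt L$ will follow once I show $\alpha+\beta\lesssim\sigma\, n\sqrt L\cdot\mathrm{polylog}(n)$.

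The crux, and the step I expect to be the main obstacle, is bounding $\mathcal{A}^*(\vw)=\sum_{k,l}w_{k,l}B_{k,l}$ with $B_{k,l}=\md_l\vf_k\vf_k^*\md_l^*$, which carries randomness both from the noise $\vw$ and from the shared masks (the summands for a fixed $l$ are dependent across $k$). My strategy is to condition on the masks, so that only the independent mean-zero sub-Gaussian scalars $w_{k,l}$ remain random, and then apply matrix Khintchine/Bernstein inequalities. This reduces $\beta$ to the mask-dependent quantity $\|\sum_{k,l}B_{k,l}^2\|_{\mathrm{op}}$ and $\alpha$ to $\sum_{k,l}\|B_{k,l}u\|^2$ together with $|u^*\mathcal{A}^*(\vw)u|$. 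Using the Fourier completeness relation $\sum_{k=1}^n\vf_k\vf_k^*=nI$, the identity $\vf_k^*\md_l^*\md_l\vf_k=\sum_t|d_l(t)|^2$, and Assumption \ref{assump:mask}, these quantities concentrate around $nm\nu^2$ for $L\ge C_0\,\omega\log^2 n$, giving $\beta\lesssim\sigma\nu\, n\sqrt{L\log n}$ and, via a sub-Gaussian vector-norm bound, $\alpha\lesssim\sigma\nu\, n\sqrt L\cdot\mathrm{polylog}(n)$; the vector term dominates the scalar term $|u^*\mathcal{A}^*(\vw)u|\lesssim\sigma\nu\sqrt{m}$ by a factor $\sqrt n$. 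Verifying that the mask quantities are uniformly well behaved (so that conditioning is valid on a high-probability event over the masks) and tracking the precise powers of $\log n$ so that the accumulated factors collapse to exactly $\log^2 n$ is the delicate part of the argument.
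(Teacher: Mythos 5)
Your skeleton coincides with the paper's: the same basic inequality $\norm{\A(\bm{H})}^2\le 2\nj{\A(\bm{H}),\vw}$ from feasibility of $\mx_0$, the same descent-cone facts (the paper phrases your $\normf{\bm{H}}\le\sqrt2\,\normf{\bm{H}_T}$ as the decomposition $\Z=-\beta\vx_0\vx_0^*+\Z_0$ with $\beta\ge1/\sqrt2$ and $\norms{\Z_0}_*\le\beta$), and the same restricted lower isometry $\norm{\A(\bm{H})}\gtrsim\frac{\sqrt m}{\log n}\normf{\bm{H}}$ on the cone, obtained from the exact dual certificate of Lemma \ref{le:exactdual}. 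Where you genuinely diverge is the treatment of the noise term. The paper never forms $\A^*(\vw)$: it bounds the scalar $\nj{\A(\Z),\vw}$ directly by Hoeffding's inequality conditional on the masks, using the deterministic bound of Lemma \ref{le:opnorm}, namely $\sum_{k,l}\abs{\vf_k^*\md_l^*\Z\md_l\vf_k}^2=\norm{\A(\Z)}^2\le M^4n^2L$, as the variance proxy. This yields $\abs{\nj{\A(\Z),\vw}}\lesssim\omega\sigma M^2 n\sqrt L$ with \emph{no} logarithmic factor, so that the entire $\log^2 n$ in the final bound comes from squaring the lower isometry. Your route through $\alpha=\normf{\mathcal P_T\A^*(\vw)}$ and $\beta=\norm{\A^*(\vw)}_{\mathrm{op}}$ is heavier machinery (matrix Khintchine/Bernstein after conditioning on the masks) in place of a one-line scalar Hoeffding bound.

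This divergence is not cost-free: the operator-norm bound for the matrix series $\sum_{k,l}w_{k,l}\md_l\vf_k\vf_k^*\md_l^*$ necessarily carries a dimensional factor, $\beta\lesssim\sigma M^2 n\sqrt{L\log n}$ being the best that standard matrix concentration gives, so your argument as sketched delivers $\normf{\hat\mx-\mx_0}\lesssim\sigma\log^{5/2}n/\sqrt L$ rather than the stated $\sigma\log^2 n/\sqrt L$. The ``delicate log accounting'' you defer is exactly the point where your framework falls short of the theorem, and I do not see how to remove that $\sqrt{\log n}$ within it. On the other hand, your split has a real rigor advantage the paper lacks: $\alpha$ and $\beta$ do not depend on $\bm{H}$, so your bound on $\nj{\bm{H},\A^*(\vw)}$ is uniform over the cone and hence valid for the data-dependent error direction, whereas the paper applies a pointwise Hoeffding bound to the direction $\Z=(\hat\mx-\mx_0)/t$, which itself depends on $\vw$ --- a dependence the paper's proof silently glosses over. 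In short: same architecture, but a different noise step that trades a $\sqrt{\log n}$ factor for uniformity; to recover the theorem's exact rate you would need either the paper's direct scalar bound (made uniform by some other device) or additional structure in $\A^*(\vw)$ beyond what generic matrix concentration sees.
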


\begin{remark} \label{re:1.5}
For Gaussian noise, the error bound given in Theorem \ref{UpperGaussian} is tighter than that in Theorem \ref{mainresult}. To see this, observe that when $\vw \sim \mathcal{N}(0, \sigma^2I_{m})$, it holds that $\norm{\vw} = O(\sigma \sqrt{m})$ with high probability. Here, $m:=nL$.
 Therefore, the error bound in Theorem \ref{mainresult} is  $\normf{\hat{\mx} - \mx_0} \le C \sqrt{\sigma \log n}\norm{\vx_0}$ for a constant $C>0$. However, the error bound in Theorem \ref{UpperGaussian} approaches zero as $L \to \infty$.  
\end{remark}

\begin{remark}
Similar to Remark \ref{re:advn},  under the assumptions of Theorem \ref{UpperGaussian},  an estimate $\hat{\vx} \in \C^n$ can be  constructed from the solution   $\hat{\mx}$ to \eqref{model:gau}  such that
\begin{equation} \label{eq:upgaumini}
\mbox{dist}(\hat\vx,\vx_0)  \le C_2 \min \left\{\norm{\vx_0}, \sqrt{\frac{ \sigma \log^2 n}{\sqrt{L}}} \right\}.
\end{equation}
\end{remark}

We next give a minimax error bound under the standard Gaussian noise.  In  the following theorem, we focus on real-valued signals $\vx_0 \in \R^n$, and the distance between a estimator $\hat \vx \in \R^n$ and $\vx_0$ is defined as $\mbox{dist}(\hat\vx,\vx_0)=\min\dkh{\hat\vx-\vx_0, \hat\vx+\vx_0}$.

\begin{theorem} \label{th:minimax}
Suppose that the masks $\dkh{\vd_l}_{l=1}^L$ satisfy Assumption \ref{assump:mask},   and the number of masks $L \le  C_0 \log^k n$ for some fixed integer $k\ge 1$ and constant $C_0 > 0$ independent of $n$, with $n$ is sufficiently large. Assume that the noises $\dkh{w_{k,l}}$ are independent mean-zero Gaussian random variables with variance $\sigma^2$, i.e.,  $\vw_{k,l} \sim \mathcal{N}(0,\sigma^2)$. Then, with probability approaching $1$, the minimax risk under the Gaussian model \eqref{eq:measu} obeys
\[
\inf_{\hat\vx \in \R^n} \sup_{\vx_0 \in \R^n \atop \norm{\vx_0} \ge \sigma} \E\zkh{ {\rm{dist}}(\hat\vx,\vx_0)  |  \dkh{\vd_l}_{l=1}^L} \ge \frac{c_0 \sigma}{\sqrt{L}  \log n  \norm{\vx_0}},
\]
where the infimum is over all estimators $\hat \vx$, and $c_0>0$ is a constant depending only on $M$. 

\end{theorem}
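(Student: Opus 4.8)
The plan is to prove a \emph{local} minimax lower bound by constructing, around a single well-chosen signal $\vx_0\in\R^n$ with $\norm{\vx_0}\ge\sigma$, a rich family of nearby hypotheses and applying Assouad's lemma. Conditioned on the masks $\dkh{\vd_l}$, the measurement vector for a signal $\vx$ is Gaussian, $\vy\sim\mathcal{N}(\mathcal{A}(\vx\vx^*),\sigma^2 I_m)$, so the whole argument reduces to a smooth Gaussian-mean estimation problem. Concretely, I would fix an orthonormal basis $\dkh{\vu_j}_{j=1}^{n-1}$ of the orthogonal complement of $\vx_0$ (this basis may depend on the masks, since we argue conditionally), pick a scale $\delta>0$ to be determined, and set, for each sign pattern $\tau\in\dkh{-1,+1}^{n-1}$, $\vx_\tau = \vx_0 + \delta\sum_{j=1}^{n-1}\tau_j\,\vu_j$. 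Because every $\vx_\tau$ lies in a ball of radius $\delta\sqrt{n}\ll\norm{\vx_0}$ around $\vx_0$, the sign branch in $\mbox{dist}$ is inactive and $\mbox{dist}(\vx_\tau,\vx_{\tau'})=\norm{\vx_\tau-\vx_{\tau'}}=2\delta\sqrt{\rho(\tau,\tau')}$, where $\rho$ is the Hamming distance; thus separation in parameter space is exactly controlled by the Hamming metric, as Assouad requires.

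The second step is to bound the Kullback--Leibler divergence between Hamming-adjacent hypotheses. For the Gaussian model this is exact:
\[
D_{\mathrm{KL}}\bigl(\vx_\tau\,\|\,\vx_{\tau'}\bigr) = \frac{1}{2\sigma^2}\,\norm{\mathcal{A}\bigl(\vx_\tau\vx_\tau^*-\vx_{\tau'}\vx_{\tau'}^*\bigr)}^2 .
\]
If $\tau,\tau'$ differ only in coordinate $j$, then $\vx_\tau\vx_\tau^*-\vx_{\tau'}\vx_{\tau'}^* = 2\delta\tau_j(\vx_0\vu_j^*+\vu_j\vx_0^*) + O(\delta^2)$, so up to a negligible quadratic remainder the divergence equals $\tfrac{2\delta^2}{\sigma^2}\norm{\mathcal{A}(\vx_0\vu_j^*+\vu_j\vx_0^*)}^2$. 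Writing $\vb_{k,l}$ for the (real) gradient of $\tfrac12|\nj{\md_l\vf_k,\vx_0}|^2$ and $B:=\sum_{k,l}\vb_{k,l}\vb_{k,l}^\T$ for the associated Gram matrix on $\vx_0^{\perp}$, every adjacent divergence is at most $\tfrac{c\,\delta^2}{\sigma^2}\max_j \vu_j^\T B\,\vu_j\le \tfrac{c\,\delta^2}{\sigma^2}\lambda_{\max}(B)$. The central estimate I would establish is the operator-norm bound
\[
\lambda_{\max}(B)\;\le\; C\,\nu^2\norm{\vx_0}^2\,n\bigl(L+\log^2 n\bigr)
\]
with probability approaching $1$ over the masks. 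Choosing $\delta^2 \asymp \sigma^2/\lambda_{\max}(B)$ then forces every adjacent KL below a fixed constant, hence (by Pinsker) every adjacent total-variation distance below, say, $1/2$.

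With these two ingredients Assouad's lemma yields, conditionally on the masks and on the above event, $\inf_{\hat\vx}\max_{\tau}\E[\mbox{dist}(\hat\vx,\vx_\tau)\mid\dkh{\vd_l}]\ge c\,\delta\sqrt{n}$, where the $\sqrt n$ records that a constant fraction of the $n-1$ sign bits cannot be recovered; passing from the squared to the first-power loss uses $\sqrt{\#\text{errors}}\ge \#\text{errors}/\sqrt{n}$ together with the lower bound on the expected number of misclassified bits. Substituting $\delta\asymp \sigma/\sqrt{\lambda_{\max}(B)}$ and the operator-norm bound, and using $m=nL$, gives
\[
c\,\delta\sqrt n \;\gtrsim\; \frac{\sigma}{\nu\,\norm{\vx_0}\sqrt{L+\log^2 n}}\;\ge\; \frac{c_0\,\sigma}{\sqrt{L}\,\log n\,\norm{\vx_0}},
\]
where the last inequality holds for $L\ge2$ and is where the hypothesis $L\le C_0\log^k n$ is used: it keeps $\log m\asymp\log n$ throughout and makes the $\log^2 n$ term comparable to the target. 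Taking the supremum over $\vx_0$ with $\norm{\vx_0}\ge\sigma$ (it suffices to exhibit one such $\vx_0$) completes the argument.

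The main obstacle is the operator-norm bound on $B$, i.e.\ showing that the linearized measurement map is \emph{well-conditioned} on the tangent space at $\mx_0=\vx_0\vx_0^*$: its largest eigenvalue exceeds the average eigenvalue $\tfrac1n\tr B\asymp\nu^2 m\norm{\vx_0}^2$ by no more than a polylogarithmic factor. I would prove this by a matrix Bernstein/Rosenthal bound applied to the sum $\sum_{k,l}\vb_{k,l}\vb_{k,l}^\T$ of rank-one terms that are independent across $l$, using $\norm{\md_l\vf_k}^2\le M^2 n$ (bounded masks) to control $\|\E B\|$, and the sub-exponential tail of $|\nj{\md_l\vf_k,\vx_0}|^2$ to bound the per-term size by $R\lesssim \nu^2 n\norm{\vx_0}^2\log n$; the deviation term $R\log n\lesssim \nu^2 n\norm{\vx_0}^2\log^2 n$ is precisely the source of the extra $\log n$ in the stated bound. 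This is the same flavor of concentration that underlies the near-isometry estimates behind Theorems~\ref{mainresult} and \ref{UpperGaussian}, so I expect the relevant lemmas to be reusable. I note finally that a sharper, direction-dependent choice $\delta_j\propto(\vu_j^\T B\,\vu_j)^{-1/2}$ (equivalently, the multivariate van Trees inequality, which replaces $\lambda_{\max}(B)$ by $\tfrac1n\tr B$) would remove the residual $\log n$; the uniform-$\delta$ Assouad scheme above is what produces the logarithmic factor in the theorem as stated.
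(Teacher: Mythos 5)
Your proposal is sound and reaches the stated rate, but by a genuinely different route than the paper. The paper proves this theorem via the many-hypothesis (Fano/Tsybakov) method of Proposition \ref{pro:Fano}: Lemma \ref{le:seped} builds $N=\exp(n/30)$ \emph{random} Gaussian perturbations of $\vx_0$, verifies pairwise separation and a pointwise incoherence bound on $\max_{k,l}\abs{\va_{k,l}^*(\vw-\vx_0)}$ (retaining, by a Markov pruning step, exponentially many hypotheses satisfying it), and then bounds the KL divergence of each hypothesis to the center \emph{exactly} for the quadratic Gaussian model---no linearization---using only scalar tail bounds (this is where \eqref{eq:maxfdl} re-enters). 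You instead use Assouad's hypercube: $2^{n-1}$ deterministic sign perturbations along an orthonormal basis of $\vx_0^{\perp}$, with adjacent-pair KL controlled by linearizing the forward map and proving a spectral bound $\lambda_{\max}(B)\lesssim \nu^2\norm{\vx_0}^2\, n\,(L+\log^2 n)$ on the Gram matrix of the linearized map, via matrix Bernstein blocked over the $L$ independent masks (correctly so, since the rank-one terms are dependent across $k$ within a single mask). Your route concentrates all the mask randomness into one reusable spectral estimate, makes the origin of the residual $\log n$ transparent, and even indicates how a trace/van Trees refinement could remove it; the paper's route needs no matrix concentration and no remainder control, since the KL is computed in closed form for the quadratic model. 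Incidentally, your central estimate can be had more cheaply than by matrix Bernstein: for unit $\vu\perp\vx_0$ one has $\vu^{\T} B\,\vu \le \max_{k,l}\abs{\va_{k,l}^*\vx_0}^2 \cdot n\,\lambda_{\max}\bigl(\sum_{l}\md_l\md_l^*\bigr) \lesssim \norm{\vx_0}^2\, n\,(L\log n+\log^2 n)$ uniformly in $\vu$, which is weaker by a $\log$ but still yields $\delta\sqrt{n}\gtrsim \sigma/(\sqrt{L}\log n\,\norm{\vx_0})$ since $L\log n+\log^2 n\lesssim L\log^2 n$.

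One step you wave away does need an argument: the ``negligible quadratic remainder.'' Adjacent hypotheses differ by $2\delta\tau_j\vu_j$, but the cross terms in $\vx_\tau\vx_\tau^*-\vx_{\tau'}\vx_{\tau'}^*$ involve $\vx_{\tau'}-\vx_0$, whose norm is $\delta\sqrt{n}$ rather than $\delta$, so the remainder has Frobenius norm of order $\delta^2\sqrt{n}$; the deterministic bound of Lemma \ref{le:opnorm} is far too crude to dismiss its image under $\A$, and you must reuse the same incoherence estimates behind $B$ (e.g.\ $\max_{k,l}\abs{\va_{k,l}^*\vu_j}^2\lesssim\log n$ together with the uniform diagonal bound on $\sum_l\md_l\md_l^*$). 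Even with these, the remainder is dominated by the linear term only when $\norm{\vx_0}^2$ is not too small relative to $\sigma$ (up to logarithmic factors)---a low-SNR regime that the hypothesis $\norm{\vx_0}\ge\sigma$ alone does not exclude. To be fair, the paper's own computation hides the identical issue in the factor $\bigl(2\abs{\va_{k,l}^*\vx_0}+\abs{\va_{k,l}^*(\vw_k-\vx_0)}\bigr)^2$, so this is a shared looseness rather than a defect unique to your approach; but in a full write-up you should state the regime in which the remainder is controlled.
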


\begin{remark}
Theorem \ref{th:minimax} shows that  for Gaussian noise, it may be  possible to reduce the error bound to
\[
\mbox{dist}(\hat\vx,\vx_0)  \le \frac{C\sigma}{\sqrt{L} \norm{\vx_0}}
\]
for a constant $C>0$. How to achieve this improvement is an interesting problem and a direction for future work.
\end{remark}

\subsection{Notations}
Throughout this paper, we denote by $\mathcal{H}^{n \times n}$ the set of all Hermitian matrices. The trace of a matrix is denoted by $\tr(\cdot)$. For any two matrices $\Y, \Z \in \mathcal{H}^{n \times n}$, the Hilbert–Schmidt inner product is defined as $\nj{\Y, \Z} = \tr(\Y \Z)$, and we write $\Y \succeq \Z$ if and only if $\Y - \Z$ is positive semidefinite. 
We denote by $\norm{\cdot}$, $\normf{\cdot}$, and $\norms{\cdot}_*$ the operator norm, Frobenius norm, and nuclear norm of a matrix, respectively. For a matrix-valued operator $\mathcal{A}$ acting on $\mathcal{H}^{n \times n}$, the operator norm is defined as
\[
\norm{\mathcal{A}} = \sup_{\Z \in \mathcal{H}^{n \times n}} \frac{|\tr(\Z \mathcal{A}(\Z))|}{\normf{\Z}}.
\]
For a vector $\vz \in \Cn$, we use $\vz^\T$ and $\vz^*$ to denote the transpose and the conjugate transpose of $\vz$, respectively. For a real number $a \in \R$, we use $\lceil a \rceil$ to denote the smallest integer that is greater than or equal to $a$.

\subsection{Organization}
The paper is organized as follows. In Section \ref{sec:2}, we introduce some notations and definitions that will be used throughout the paper. In particular, the concepts of robust injectivity and approximate dual certificates play a key role in proving the main results. 
In Section  \ref{sec:3}, we first show how to construct an exact dual certificate from an approximate dual certificate, followed by the proof of the main results. Section  \ref{sec:4} presents numerical experiments that validate the optimality of our theoretical findings. 
A brief discussion is provided in Section  \ref{sec:5}. The appendix contains the proofs of technical lemmas.

\section{Preliminaries} \label{sec:2}
The aim of this section is to introduce some technical lemmas that will be used throughout the paper. 
Let $\vx_0 \in \Cn$ be the target signal we wish to recover. The measurements we obtain are
\[
\vy = \mathcal{A}(\vx_0 \vx_0^*) + \vw,
\]
where $\mathcal{A}$ is defined in \eqref{def:A} and $\vw$ is a noise vector.  The corresponding adjoint operator $\A^*$ of the linear map $\A$ is defined as
\begin{equation} \label{eq:adj}
\A^*: \R^{nL} \to \mathcal H^{n\times n} \qquad \vb \to  \A^*(\vb)=\sum_{l=1}^L \sum_{k=1}^n  b_{k,l} \md_l\vf_k \vf_k^* \md_l^*.
\end{equation}
Without loss of generality, we assume $\norm{\vx_0} = 1$.
The tangent space to the manifold of all rank-$1$ Hermitian matrices at $\mx_0 := \vx_0 \vx_0^*$ is given by
\begin{equation} \label{eq:Tangent}
T = \{ \vx_0 \vz^* + \vz \vx_0^* : \vz \in \Cn \} \subset \mathcal{H}^{n \times n},
\end{equation}
which is a subspace of Hermitian matrices $\mathcal{H}^{n \times n}$.  Let $T^\perp$ denote its orthogonal complement under the Frobenius inner product. Then any Hermitian matrix $\Z \in \mathcal{H}^{n \times n}$ can be decomposed as
\[
\Z = \mathcal{P}_T \Z + \mathcal{P}_{T^\perp} \Z := \Z_T + \Z_{T^\perp}.
\]
Throughout the paper, we use $\mathcal{P}_T \Z$ or $ \Z_T$ to denote the orthogonal projection of $\Z$ onto the tangent space $T$.

\subsection{Robust injectivity and uniform upper bound}
We present several intermediate results that will be used in the paper. 
The following lemma shows that the linear map $\mathcal{A}$ is robustly injective on the tangent space $T$ at $\mx_0$ with high probability, provided that the number of masks satisfies $L \ge O(\log n)$.

\begin{lemma}\cite[Proposition 8]{CDPGross}  \label{le:injective}
Assume that the number of masks $L \ge C_0  \log n$ for some constant $C_0$ depending only on $\nu, M$. Then with probability at least $1-\frac1n$, it holds
\[
\frac{1}{\nu \sqrt{nL}} \norm{\A(\Z)} > \frac12 \normf{\Z}
\]
for all matrices $\Z \in T$. Here, $\A$ is given in \eqref{def:A} and $\nu, M$ are as in Assumption \ref{assump:mask}.
\end{lemma}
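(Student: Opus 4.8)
The plan is to recast the statement as a lower bound on the smallest eigenvalue of a random self-adjoint operator restricted to $T$, and then combine an \emph{exact} expectation computation with a matrix concentration bound. Squaring, the claim is equivalent to $\frac{1}{\nu^2 nL}\norm{\A(\Z)}^2 > \tfrac14\normf{\Z}^2$ for all $\Z\in T$, i.e. $\lambda_{\min}(\mathcal{R}) > \tfrac14$ where $\mathcal{R} := \frac{1}{\nu^2 nL}\,\mathcal{P}_T\A^*\A\,\mathcal{P}_T$ acts on $T$. First I would parametrize $\Z = \vx_0\vz^* + \vz\vx_0^*\in T$ and rewrite each measurement: with $\va := \md_l^*\vx_0$ and $\vb := \md_l^*\vz$ one has $\nj{\md_l\vf_k\vf_k^*\md_l^*,\Z} = 2\,\Real\{(\vf_k^*\va)\overline{(\vf_k^*\vb)}\}$, so the quadratic form becomes a sum over masks $l$ and Fourier frequencies $k$ of such bilinear terms.

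Next I would compute $\E\mathcal{R}$ exactly. Expanding $\E\norm{\A(\Z)}^2 = L\sum_k\E|\vf_k^*\md^*\Z\md\vf_k|^2$ for a generic single mask $\md$ and using independence of the entries together with $\E d = \E d^2 = 0$ and $\E|d|^4 = 2\nu^2$ (Assumption \ref{assump:mask}), only the ``paired'' index patterns survive. Because every DFT entry is flat, $|\vf_k(t)| = 1$ and $\sum_k\vf_k\vf_k^* = nI$, the surviving terms assemble into $\E|\vf_k^*\md^*\Z\md\vf_k|^2 = \nu^2(\normf{\Z}^2 + |\tr\Z|^2)$, independently of $k$; here the fourth-moment value $2\nu^2$ is precisely what cancels the double-counting of the diagonal produced by the two admissible pairings. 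Hence $\E\norm{\A(\Z)}^2 = \nu^2 nL(\normf{\Z}^2 + |\tr\Z|^2)$, so $\E\mathcal{R}\succeq\mathcal{I}_T$ and the desired bound holds in expectation with room to spare.

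It then remains to control the fluctuation $\norm{\mathcal{R} - \E\mathcal{R}}$. Writing $\mathcal{R} = \frac1L\sum_{l=1}^L\mathcal{R}_l$ as an average of $L$ i.i.d. self-adjoint operators on the $(2n-1)$-dimensional space $T$, I would invoke a matrix Bernstein inequality: if the per-mask summand has variance proxy $\norm{\E(\mathcal{R}_1-\E\mathcal{R}_1)^2}$ and effective range both of order $O(1)$ in $M,\nu$, then the dimension factor $\approx 2n$ forces the fluctuation below $\tfrac34$ with probability at least $1-\tfrac1n$ precisely once $L\ge C_0\log n$. Combined with $\E\mathcal{R}\succeq\mathcal{I}_T$ this yields $\lambda_{\min}(\mathcal{R}) > \tfrac14$ and hence the lemma.

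The main obstacle is the range bound on $\mathcal{R}_l$. A crude estimate only gives $\norm{\mathcal{R}_l} = O(n)$: within a single mask the $n$ Fourier vectors $\vf_k$ are deterministic, so for an adversarial mask realization the measurements can align with a single direction, making $\max_k|\vf_k^*\va|^2$ as large as $M^2\normone{\vx_0}^2 = O(n)$. Thus the bounded version of matrix Bernstein cannot be applied directly, which is exactly why the Fourier/CDP ensemble is far harder than the Gaussian one. The resolution, following \cite{CDPGross}, is to replace the almost-sure range bound by a tail/moment bound on $\norm{\mathcal{R}_l}$ (e.g. via truncation of atypical masks, or a sub-exponential matrix Bernstein inequality with $\psi_1$ control), exploiting both the boundedness $|d|\le M$ and the flatness of the DFT basis to show that $\norm{\mathcal{R}_l}$ is $O(1)$ up to polylogarithmic tails. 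Carrying out this moment estimate is the technical heart of the proof, and since the statement is quoted as \cite[Proposition 8]{CDPGross}, I would rely on their concentration machinery for this final step.
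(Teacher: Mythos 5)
First, a point of reference: the paper does not prove this lemma at all --- it is quoted verbatim from \cite[Proposition 8]{CDPGross} --- so the only meaningful comparison is against that reference's argument. Measured against it, the parts of your sketch that can be checked are correct: the reduction to $\lambda_{\min}\bigl(\tfrac{1}{\nu^2 nL}\,\mathcal P_T \A^*\A\,\mathcal P_T\bigr) > \tfrac14$ on $T$ is right, the parametrization $\nj{\md_l\vf_k\vf_k^*\md_l^*,\Z}=2\,\Real\{(\vf_k^*\md_l^*\vx_0)\overline{(\vf_k^*\md_l^*\vz)}\}$ is right, and your expectation computation is exactly right: the only surviving pairings give $\E\bigl|\vf_k^*\md^*\Z\md\vf_k\bigr|^2=\nu^2\bigl(\normf{\Z}^2+(\tr\Z)^2\bigr)$, with $\E|d|^4=2\nu^2$ precisely cancelling the diagonal double count, hence $\E\norm{\A(\Z)}^2=\nu^2 nL\bigl(\normf{\Z}^2+(\tr\Z)^2\bigr)\ge\nu^2 nL\normf{\Z}^2$. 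You also correctly locate the obstruction in the concentration step.

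The gap is that the resolution you gesture at does not deliver the stated parameters, and this is where the entire difficulty lives. If you work per mask and control the range by ``$O(1)$ up to polylogarithmic tails,'' the effective range is $K\asymp\max_k|\vf_k^*\md_l^*\vx_0|^2$, and for a well-spread $\vx_0$ this maximum over the $n$ frequencies within one mask is genuinely of order $\log n$ with probability tending to one; you cannot truncate a whole mask below that level without incurring constant bias in the expectation. With $K\asymp\log n$, matrix Bernstein/Chernoff over the $L$ per-mask summands gives failure probability $\approx n\exp(-cL/\log n)$, which forces $L\gtrsim\log^2 n$ --- strictly weaker than the stated $L\ge C_0\log n$ with probability $1-\tfrac1n$. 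The argument that achieves $\log n$ is finer: one works with the $nL$ individual PSD summands, i.e.\ the rank-one operators $\Z\mapsto\nj{\md_l\vf_k\vf_k^*\md_l^*,\Z}\,\mathcal P_T\bigl(\md_l\vf_k\vf_k^*\md_l^*\bigr)$, and truncates each one on the event $\{|\vf_k^*\md_l^*\vx_0|^2\le C\}$ at a \emph{constant} level $C=C(M,\nu)$. Since $\vf_k^*\md_l^*\vx_0$ is sub-Gaussian with norm $O(M)$, the truncation bias summed over all $nL$ terms is exponentially small in $C$, so the truncated expectation still dominates, say, $\tfrac78\,\mathcal I_T$; dropping PSD summands only lowers the operator, so a lower bound for the truncated sum transfers to the original; and each truncated summand has norm $O(1/L)$, so matrix Chernoff gives failure probability $\approx n\exp(-c\,\nu^2L/M^2)$, i.e.\ $L\gtrsim\log n$ as claimed. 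In short, your sketch is sound up to the point you defer to \cite{CDPGross}, but the specific mechanism you propose for that deferred step would prove a quantitatively weaker lemma than the one stated.
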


A uniform upper bound on $\norm{\A(\Z)} $ for all matrices $\Z \in \mathcal H^{n\times n}$ is given below.

\begin{lemma}\cite[Lemma 10]{CDPGross} \label{le:opnorm}
Let $\A$ be defined in \eqref{def:A}. Then  it holds
\[
\frac{1}{\sqrt{nL}} \norm{\A(\Z)} \le M^2 \sqrt{n} \normf{\Z}
\]
for all matrices $\Z \in \mathcal H^{n\times n}$. 
\end{lemma}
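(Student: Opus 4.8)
The plan is to establish this as a deterministic bound valid for every choice of masks satisfying the boundedness $\abs{d_l(t)} \le M$ of Assumption \ref{assump:mask}; no probabilistic input is needed here. Expanding the definition \eqref{def:A}, the $(k,l)$ coordinate of $\A(\Z)$ is $\nj{\md_l \vf_k \vf_k^* \md_l^*, \Z} = \vf_k^* \md_l^* \Z \md_l \vf_k$, so
\[
\norm{\A(\Z)}^2 = \sum_{l=1}^L \sum_{k=1}^n \bigl| \vf_k^*\, \md_l^* \Z \md_l\, \vf_k \bigr|^2 .
\]
Fixing $l$ and writing $\mx_l := \md_l^* \Z \md_l \in \mathcal H^{n\times n}$, the task reduces to bounding the inner sum $\sum_{k=1}^n \abs{\vf_k^* \mx_l \vf_k}^2$. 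The naive estimate $\abs{\vf_k^* \mx_l \vf_k} \le \norm{\vf_k}^2 \norm{\mx_l} = n \norm{\mx_l}$, summed trivially over $k$, loses a factor of $\sqrt n$ relative to the target, so the whole point is to sum over $k$ more carefully.

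The key step exploits the Fourier structure. With $\omega = e^{2\pi \i / n}$ and $(\vf_k)_t = \omega^{kt}$, we have $\vf_k^* \mx_l \vf_k = \sum_{s,t} (\mx_l)_{st}\, \omega^{k(t-s)}$, which depends on $(s,t)$ only through the residue $r := (t-s) \bmod n$. Grouping the entries by this residue and setting $G_l(r) := \sum_{(t-s)\equiv r} (\mx_l)_{st}$, the array $\{\vf_k^* \mx_l \vf_k\}_{k=1}^n$ is precisely the (unnormalized) discrete Fourier transform of $\{G_l(r)\}_{r=0}^{n-1}$; since $\{1,\dots,n\}$ is a complete residue system modulo $n$, Parseval's identity yields
\[
\sum_{k=1}^n \bigl| \vf_k^* \mx_l \vf_k \bigr|^2 = n \sum_{r=0}^{n-1} \abs{G_l(r)}^2 .
\]
Each residue class contains exactly $n$ index pairs $(s,t)$, so Cauchy--Schwarz gives $\abs{G_l(r)}^2 \le n \sum_{(t-s)\equiv r} \abs{(\mx_l)_{st}}^2$; summing over $r$, which partitions all index pairs, produces $\sum_r \abs{G_l(r)}^2 \le n \normf{\mx_l}^2$, and hence $\sum_{k} \abs{\vf_k^* \mx_l \vf_k}^2 \le n^2 \normf{\mx_l}^2$.

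Finally I would control $\normf{\mx_l}$ entrywise: since $\md_l = \diag(\vd_l)$ one has $(\mx_l)_{st} = \bar d_l(s)\, Z_{st}\, d_l(t)$, whence $\normf{\mx_l}^2 = \sum_{s,t} \abs{d_l(s)}^2 \abs{d_l(t)}^2 \abs{Z_{st}}^2 \le M^4 \normf{\Z}^2$ by $\abs{d_l(t)} \le M$. Combining the estimates, the inner sum is at most $n^2 M^4 \normf{\Z}^2$ for each $l$, so summing over $l = 1,\dots,L$ gives $\norm{\A(\Z)}^2 \le L n^2 M^4 \normf{\Z}^2$; dividing by $nL$ and taking square roots gives exactly $\frac{1}{\sqrt{nL}} \norm{\A(\Z)} \le M^2 \sqrt n \normf{\Z}$. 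The main, and essentially only, obstacle is the Parseval step: recognizing $k \mapsto \vf_k^* \mx_l \vf_k$ as the Fourier transform of the diagonal sums $G_l(r)$ is what recovers the factor $\sqrt n$ that a term-by-term bound would discard; the remaining manipulations are elementary.
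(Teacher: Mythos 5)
Your proof is correct: the paper does not reprove this lemma but cites \cite[Lemma 10]{CDPGross}, and the argument there rests on the same fact you exploit --- for fixed $l$ the matrices $\vf_k\vf_k^*/n$ form an orthonormal family in the Hilbert--Schmidt inner product (since $|\vf_k^*\vf_j| = n\,\delta_{kj}$), so Bessel's inequality gives $\sum_{k=1}^n \abs{\vf_k^* \mx_l \vf_k}^2 \le n^2 \normf{\mx_l}^2$, which is exactly your Parseval-on-diagonals identity combined with Cauchy--Schwarz in different clothing. Together with your deterministic bound $\normf{\md_l^* \Z \md_l} \le M^2 \normf{\Z}$ from $\abs{d} \le M$, this yields the stated estimate, so your argument is essentially the standard one.
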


\subsection{Approximate  dual certificates}
In \cite{CDPGross}, the so-called approximate dual certificate was constructed via the widely used golfing scheme first introduced by Gross \cite{Grossgolf}. The dual certificate is employed to guarantee exact reconstruction of the convex program \eqref{mod:con} with the aid of robust injectivity in the absence of noise. We present the result here, including an additional derivation of the norm upper bound for the dual certificate.

\begin{lemma}[Approximate dual certificate] \label{le:appdual}
Let $\omega \ge 1$. If the number of masks satisfies $L\ge C_0 \omega \log^2n$, then with probability at least $1-5/6 e^{-\omega}$, there exists an approximate dual certificate pair $(\Y',\lambda')$ such that $\Y'=\A^*(\lambda')$ and 
\begin{equation} \label{eq:appdual1}
\normf{\mathcal P_T \Y'- \vx_0\vx_0^*} \le \frac{\nu}{8 M^2 \sqrt{n}}  \quad \mbox{and} \quad  \norm{\mathcal P_{T^\perp} \Y'} \le \frac12.
\end{equation}
Furthermore, it holds
\begin{equation} \label{eq:appdual2}
\norm{\lambda'} \le \frac{c' \log n}{\sqrt{nL}}.
\end{equation}
Here, $C_0, c'>0$ are constants only depend on $M,\nu$, and $\A^*$ is the adjoint operator as in \eqref{eq:adj}.
\end{lemma}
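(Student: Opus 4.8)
The two estimates in \eqref{eq:appdual1} are precisely what the golfing scheme of \cite{CDPGross} delivers, so the plan is to reproduce that construction and then read off the coefficient vector $\lambda'$ to obtain the new bound \eqref{eq:appdual2}. First I would split the $L$ masks into $p=\lceil\log_2(8M^2\sqrt n/\nu)\rceil=O(\log n)$ disjoint batches of nearly equal size $L_i\approx L/p$, write $\A_{(i)}$ for the measurement operator using only the masks in batch $i$, and let $c_i$ be the normalization of order $(\nu^2 nL_i)^{-1}$ under which $c_i\,\mathcal P_T\A_{(i)}^*\A_{(i)}\mathcal P_T$ concentrates around $\mathcal P_T$. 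Starting from $\bm Q_0=\mx_0$ I would iterate
\begin{equation*}
\Y_i = \Y_{i-1} + c_i\,\A_{(i)}^*\A_{(i)}\bm Q_{i-1}, \qquad \bm Q_i = \mx_0 - \mathcal P_T\Y_i, \qquad i=1,\dots,p,
\end{equation*}
and set $\Y':=\Y_p$. Since $\mx_0\in T$, each residual $\bm Q_i=(\mathcal P_T-c_i\mathcal P_T\A_{(i)}^*\A_{(i)}\mathcal P_T)\bm Q_{i-1}$ stays in $T$, and the batch near-isometry forces $\normf{\bm Q_i}\le\tfrac12\normf{\bm Q_{i-1}}$; hence $\normf{\mathcal P_T\Y'-\mx_0}=\normf{\bm Q_p}\le 2^{-p}\le \nu/(8M^2\sqrt n)$, while the telescoped bound on $\norm{\mathcal P_{T^\perp}\Y'}$ is exactly as in \cite{CDPGross}. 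This gives \eqref{eq:appdual1}.

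For the new estimate, the key point is that distinct batches occupy disjoint coordinates of $\R^{nL}$, so by construction $\Y'=\A^*(\lambda')$ with the block of $\lambda'$ indexed by batch $i$ equal to $c_i\,\A_{(i)}(\bm Q_{i-1})$. Consequently
\begin{equation*}
\norm{\lambda'}^2 = \sum_{i=1}^p c_i^2\,\norm{\A_{(i)}(\bm Q_{i-1})}^2,
\end{equation*}
and it remains to bound each summand. Because $\bm Q_{i-1}\in T$, I would invoke the upper half of the same near-isometry — the two-sided companion of Lemma \ref{le:injective} for a single batch — in the form $\norm{\A_{(i)}(\Z)}^2=\langle\Z,\A_{(i)}^*\A_{(i)}\Z\rangle\le \tfrac32\nu^2 nL_i\normf{\Z}^2$ for all $\Z\in T$, which is immediate once $c_i\mathcal P_T\A_{(i)}^*\A_{(i)}\mathcal P_T$ is within $\tfrac12$ of $\mathcal P_T$ in operator norm. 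Together with the geometric decay $\normf{\bm Q_{i-1}}\le 2^{-(i-1)}$ this yields
\begin{equation*}
c_i^2\,\norm{\A_{(i)}(\bm Q_{i-1})}^2 \;\le\; \frac{C}{\nu^2 nL_i}\,4^{-(i-1)}.
\end{equation*}

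Summing over $i$ and using $L_i\approx L/p$ with $p=O(\log n)$ gives $\norm{\lambda'}^2\le \frac{Cp}{\nu^2 nL}\sum_{i\ge1}4^{-(i-1)}=O(\log n/(nL))$, whence $\norm{\lambda'}\le c'\log n/\sqrt{nL}$ (the argument in fact produces the slightly sharper $\sqrt{\log n}$ in the numerator, which is within the stated bound). For the probability, each batch carries $L_i\ge C_0\omega\log n$ masks, so the concentration estimates behind Lemma \ref{le:injective} hold per batch with failure probability $\le e^{-c\omega}$; a union bound over the $O(\log n)$ batches together with the auxiliary operator-norm estimates controlling $\mathcal P_{T^\perp}\Y'$ gives the total failure probability $\tfrac56 e^{-\omega}$ after adjusting $C_0$. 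The main obstacle is the second paragraph: one must make the normalization $c_i$ and the two-sided near-isometry on $T$ match across all $p$ batches so that the $\nu^2 nL_i$ factors cancel and the contributions decay geometrically. Crucially, the sharp batch bound $\norm{\A_{(i)}(\Z)}\le C\nu\sqrt{nL_i}\normf{\Z}$ on $T$ — rather than the global estimate of Lemma \ref{le:opnorm}, which loses an extra factor of $\sqrt n$ — is what makes $\norm{\lambda'}$ scale like $(nL)^{-1/2}$.
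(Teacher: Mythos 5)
Your reduction of \eqref{eq:appdual2} to the block identity $\norm{\lambda'}^2=\sum_i c_i^2\norm{\A_{(i)}(\mQ_{i-1})}^2$ plus a per-batch bound $\norm{\A_{(i)}(\Z)}^2\le \tfrac32\nu^2 nL_i\normf{\Z}^2$ on $T$ has a genuine gap at exactly that second step. No ``two-sided companion of Lemma \ref{le:injective}'' is available: Lemma \ref{le:injective} is stated (and proved in \cite{CDPGross}) only as a lower bound, Lemma \ref{le:opnorm} loses a factor $\sqrt n$, and the operator concentration that \cite{CDPGross} actually establishes concerns a \emph{truncated} operator, in which all measurements with $\abs{\nj{\md_l\vf_k\vf_k^*\md_l^*,\mQ}}>4M^2\gamma\log n\,\normf{\mQ}$ are discarded. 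Discarding terms only helps in the lower-bound direction, so no upper bound on the un-truncated $\norm{\A_{(i)}(\Z)}$ follows from it. The upper bound is moreover genuinely delicate for CDP because of heavy tails: choosing $\vz$ proportional (after a phase rotation) to a measurement vector $\md_{l_0}\vf_{k_0}$ attaining $\max_{k,l}\abs{\vf_k^*\md_l^*\vx_0}\gtrsim\sqrt{\log n}$ (cf.\ \eqref{eq:maxfdl}) produces $\Z=\vx_0\vz^*+\vz\vx_0^*\in T$ with $\norm{\A_{(i)}(\Z)}\gtrsim\sqrt{n\log n}\,\normf{\Z}$, which is already of the same order as your claimed bound $\nu\sqrt{nL_i}\normf{\Z}$ when $L_i\asymp\log n$. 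So the bound with constant $\tfrac32$ fails for batches whose size is a modest constant times $\log n$, and any valid constant-level version (even the weaker one needed inside golfing, where $\mQ_{i-1}$ is independent of batch $i$) would require a new heavy-tailed concentration argument that appears neither in the paper nor in \cite{CDPGross} and that you do not supply. A telltale sign that this step cannot be ``immediate'': your route would yield $\norm{\lambda'}\lesssim\sqrt{\log n}/\sqrt{nL}$ and hence improve the paper's main theorem by a $\sqrt{\log n}$ factor.

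The paper's proof avoids this issue entirely, and your proposal is caught in a dilemma relative to it. The paper takes the construction of \cite{CDPGross} verbatim---including the truncation indicator and the correction terms $-\tr(\mQ_{j-1})I_n$, both absent from your recursion---and bounds $\norm{\lambda'}$ through the $\ell_\infty$ norm: the truncation caps every coordinate of $\lambda'$ by $4M^2\gamma\log n\,\normf{\mQ_{j-1}}/(\nu^2 nL)$, the trace offsets are controlled by $\abs{\tr(\mQ_{j-1})}\le\sqrt2\,\normf{\mQ_{j-1}}$ for rank-two matrices, and $\sum_j\normf{\mQ_{j-1}}=O(1)$ by the geometric decay \eqref{eq:normQ}; then $\norm{\lambda'}\le\sqrt{nL}\,\norms{\lambda'}_\infty\lesssim\log n/\sqrt{nL}$, the $\log n$ being precisely the truncation threshold. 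If you adopt that construction (as you implicitly must, both to inherit \eqref{eq:appdual1} and because without the trace correction your residuals do not contract: $\E\bigl[c_i\A_{(i)}^*\A_{(i)}\Z\bigr]\approx\Z+\tr(\Z)I_n$ for CDP and $\mathcal P_T(I_n)=\vx_0\vx_0^*\ne\bm{0}$, so $\mathcal P_T$ does not kill the spurious term), then the batch-$i$ block of $\lambda'$ consists of \emph{truncated} coefficients plus constant offsets $-c_i\tr(\mQ_{i-1})$, not $c_i\A_{(i)}(\mQ_{i-1})$; bounding that block coordinatewise by the truncation threshold reproduces the paper's $\log n$, not your $\sqrt{\log n}$. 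If instead you keep your truncation-free recursion, then \eqref{eq:appdual1} no longer follows from \cite{CDPGross}, and the per-batch concentration you need is exactly the unproven statement above. Either way the argument as written does not close.
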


\begin{proof}
See Appendix A.
\end{proof}

\section{Proof of Main Result} \label{sec:3}
In this section, we present the proofs of our main results. We begin by showing that an exact dual certificate can be constructed from the approximate dual certificate using the proposition from \cite{fuchs2022proof}, as stated below.

\begin{lemma} \label{le:exactdual}
Let $\omega \ge 1$. Assume that the number of masks obeys $L \ge C_0 \omega \log^2 n$. Then with probability at least $1- 5/6 e^{-\omega}$, there exists an exact dual certificate pair $(\Y,\lambda)$ such that $\Y=\A^*(\lambda)$ and 
\[
\mathcal P_T \Y=\vx_0 \vx_0^*, \quad \norm{\mathcal P_{T^\perp} \Y} \le \frac34, \quad \mbox{and} \quad \norm{\lambda} \le \frac{c \log n}{\sqrt{nL}}.
\]
Here, $C_0, c>0$ are constants only depend on $M,\nu$, and $\A^*$ is the adjoint operator as in \eqref{eq:adj}.
\end{lemma}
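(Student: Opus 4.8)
The plan is to build the exact certificate by a single Fuchs-type correction of the approximate certificate $(\Y',\lambda')$ furnished by Lemma~\ref{le:appdual}, using the robust injectivity of Lemma~\ref{le:injective} to invert the normal operator on the tangent space. Write $\mathcal{R}:=\mathcal{P}_T\A^*\A\mathcal{P}_T$ for the restriction to $T$ of the Gram operator, and set the residual $E:=\vx_0\vx_0^*-\mathcal{P}_T\Y'$, which lies in $T$ since $\vx_0\vx_0^*\in T$. By Lemma~\ref{le:injective}, for every $\Z\in T$ one has $\langle\Z,\mathcal{R}\Z\rangle=\norm{\A\mathcal{P}_T\Z}^2\ge\frac{\nu^2 nL}{4}\normf{\Z}^2$, so $\mathcal{R}$ is invertible on $T$ with $\norm{\mathcal{R}^{-1}}\le\frac{4}{\nu^2 nL}$. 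I would then define the correction $H:=\mathcal{R}^{-1}E\in T$, put $\lambda'':=\A(H)$ and $\lambda:=\lambda'+\lambda''$, and take $\Y:=\A^*(\lambda)=\Y'+\A^*\A(H)$.

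The exactness on $T$ is then immediate: since $H\in T$ we have $\mathcal{P}_T\A^*(\lambda'')=\mathcal{R}(H)=\mathcal{R}\mathcal{R}^{-1}E=E$, whence $\mathcal{P}_T\Y=\mathcal{P}_T\Y'+E=\vx_0\vx_0^*$, as required. This is precisely the content of the proposition in \cite{fuchs2022proof}, which I would invoke to package the construction and, more importantly, to supply the operator-norm guarantee on $T^\perp$. For that bound I would write $\norm{\mathcal{P}_{T^\perp}\Y}\le\norm{\mathcal{P}_{T^\perp}\Y'}+\norm{\mathcal{P}_{T^\perp}\A^*\A(H)}$, where the first term is at most $\frac12$ by \eqref{eq:appdual1}.

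The second term is the crux, and I expect it to be the main obstacle. A naive estimate $\norm{\cdot}\le\normf{\cdot}$ combined with Lemma~\ref{le:opnorm} applied twice only yields $\norm{\mathcal{P}_{T^\perp}\A^*\A(H)}\le M^4 n^2 L\,\normf{H}$, which together with $\normf{H}\le\frac{4}{\nu^2nL}\normf{E}$ and $\normf{E}\le\frac{\nu}{8M^2\sqrt n}$ leaves a stray factor of order $\sqrt n$ and is therefore too weak. The refined off-diagonal control of $\mathcal{P}_{T^\perp}\A^*\A\mathcal{P}_T$ provided by \cite{fuchs2022proof} is what recovers this $\sqrt n$ and delivers $\norm{\mathcal{P}_{T^\perp}\A^*\A(H)}\le\frac14$, hence $\norm{\mathcal{P}_{T^\perp}\Y}\le\frac34$. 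This is also the reason the approximate certificate in Lemma~\ref{le:appdual} is required to be accurate to order $1/\sqrt n$ on $T$ rather than merely to a constant.

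Finally, the additional ingredient here, namely the $\ell_2$ bound on $\lambda$, follows by a direct computation. Using Lemma~\ref{le:opnorm}, $\norm{\lambda''}=\norm{\A(H)}\le M^2 n\sqrt L\,\normf{H}\le M^2 n\sqrt L\cdot\frac{4}{\nu^2nL}\normf{E}\le\frac{4M^2}{\nu^2\sqrt L}\cdot\frac{\nu}{8M^2\sqrt n}=\frac{1}{2\nu\sqrt{nL}}$, and combining with $\norm{\lambda'}\le\frac{c'\log n}{\sqrt{nL}}$ from \eqref{eq:appdual2} yields $\norm{\lambda}\le\frac{c\log n}{\sqrt{nL}}$ for a suitable $c=c(M,\nu)$. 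For the probability, since $L\ge C_0\omega\log^2 n\ge C_0\log n$, the robust injectivity of Lemma~\ref{le:injective} and the approximate certificate of Lemma~\ref{le:appdual} hold simultaneously, and on the intersection of these events (of probability at least $1-\frac56 e^{-\omega}$, the injectivity failure probability being absorbed) the entire construction is valid.
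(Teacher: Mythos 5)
Your construction coincides with the paper's (both follow \cite{fuchs2022proof}): you correct the approximate certificate by $\lambda=\lambda'+\A \mathcal P_T\xkh{\mathcal P_T\A^*\A\mathcal P_T}^{-1}\xkh{\vx_0\vx_0^*-\mathcal P_T\Y'}$, and your handling of exactness on $T$, of the bound $\norm{\lambda}\le \frac{c\log n}{\sqrt{nL}}$, and of the probability accounting matches the paper's proof. However, there is a genuine gap at precisely the step you yourself call the crux: you never prove $\norm{\mathcal P_{T^\perp}\A^*\A(H)}\le\frac14$. You only assert that ``refined off-diagonal control of $\mathcal P_{T^\perp}\A^*\A\mathcal P_T$ provided by \cite{fuchs2022proof}'' delivers it, without quoting any specific statement or verifying that it applies to the CDP operator $\A$ with these constants. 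That is not a proof, and moreover the mechanism you point to is not the one that is actually needed: no refined (probabilistic) control of the off-diagonal block enters at all.

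The missing idea is an elementary, deterministic regrouping. Write $\mathcal R:=\mathcal P_T\A^*\A\mathcal P_T$ and $E:=\vx_0\vx_0^*-\mathcal P_T\Y'$, and bound
\[
\norm{\mathcal P_{T^\perp}\A^*\A\mathcal P_T\mathcal R^{-1}E}\le\norm{\A}\cdot\norm{\A\mathcal P_T\mathcal R^{-1}}\cdot\normf{E},
\]
where the middle factor is estimated \emph{as a whole}: for any $\mx$ with $\normf{\mx}\le 1$, setting $H=\mathcal R^{-1}\mathcal P_T\mx\in T$,
\[
\norm{\A H}^2=\nj{\mathcal R H,H}=\nj{\mathcal P_T\mx,\mathcal R^{-1}\mathcal P_T\mx}\le\norm{\mathcal R^{-1}},
\]
so that $\norm{\A\mathcal P_T\mathcal R^{-1}}\le\sqrt{\norm{\mathcal R^{-1}}}\le\frac{2}{\nu\sqrt{nL}}$ --- the \emph{square root} of $\norm{\mathcal R^{-1}}$, rather than the product $\norm{\A}\norm{\mathcal R^{-1}}\le\frac{4M^2}{\nu^2\sqrt L}$ that your naive factorization pays. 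This is exactly where the stray $\sqrt n$ is recovered: combining with Lemma \ref{le:opnorm} ($\norm{\A}\le M^2 n\sqrt L$) and the accuracy $\normf{E}\le\frac{\nu}{8M^2\sqrt n}$ from Lemma \ref{le:appdual} gives
\[
M^2 n\sqrt L\cdot\frac{2}{\nu\sqrt{nL}}\cdot\frac{\nu}{8M^2\sqrt n}=\frac14,
\]
hence $\norm{\mathcal P_{T^\perp}\Y}\le\frac12+\frac14=\frac34$. The only inputs are Lemmas \ref{le:injective} and \ref{le:opnorm}, which you already had in hand; the step you outsourced to the reference is a two-line computation, and your proposal is incomplete without it.
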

\begin{proof}
The proof is adapted from \cite{fuchs2022proof}. According to Lemma \ref{le:appdual}, when  $L\ge C_0 \omega \log^2n$, with probability at least $1-5/6 e^{-\omega}$, there exists an approximate dual certificate pair $(\Y',\lambda')$ such that $\Y'=\A^*(\lambda')$ and 
\begin{equation} \label{eq:appdual11}
\normf{\mathcal P_T \Y'- \vx_0\vx_0^*} \le \frac{\nu}{8 M^2 \sqrt{n}}  \quad \norm{\mathcal P_{T^\perp} \Y'} \le \frac12,\quad \norm{\lambda'} \le \frac{c' \log n}{\sqrt{nL}}.
\end{equation}
Here, $C_0, c'>0$ are constants only depends on $M,\nu$. Considering the  operate $\mathcal P_T \A^* \A \mathcal P_T:T \to  T$, it follows from  Lemma \ref{le:injective} that it is invertible and
\[
\norm{\xkh{\mathcal P_T \A^* \A \mathcal P_T}^{-1}} \le \frac2{\nu \sqrt{nL}}.
\]
Define 
\[
\lambda= \lambda' + \A \mathcal P_T \xkh{\mathcal P_T \A^* \A \mathcal P_T}^{-1} \xkh{\vx_0\vx_0^* - \mathcal P_T \Y' } \quad \mbox{and}\quad \Y=\A^*(\lambda).
\]
Then one can verify that 
\[
 \mathcal P_T \Y=  \mathcal P_T \A^*( \lambda') + \mathcal P_T \A^*\A \mathcal P_T \xkh{\mathcal P_T \A^* \A \mathcal P_T}^{-1} \xkh{\vx_0\vx_0^* - \mathcal P_T \Y' }=  \vx_0\vx_0^*
\]
and
\begin{eqnarray*}
\norm{ \mathcal P_{T\perp} \Y} & =& \norm{ \mathcal P_{T\perp} \Y'+  \mathcal P_{T\perp}  \A^*\A \mathcal P_T \xkh{\mathcal P_T \A^* \A \mathcal P_T}^{-1} \xkh{\vx_0\vx_0^* - \mathcal P_T \Y' } } \\
&\le & \norm{\mathcal P_{T\perp} \Y'} + \norm{\A} \norm{ \A \mathcal P_T \xkh{\mathcal P_T \A^* \A \mathcal P_T}^{-1} } \normf{\vx_0\vx_0^* - \mathcal P_T \Y' } \\
&\le & \frac12+ M^2 n \sqrt{L} \cdot   \frac2{\nu \sqrt{nL}} \cdot \frac{\nu}{8 M^2 \sqrt{n}} = \frac34,
\end{eqnarray*}
where the second inequality comes from Lemma \ref{le:opnorm} that $ \norm{\A}  \le M^2 n \sqrt{L}$,  inequality \eqref{eq:appdual11}, and the  fact that
\begin{eqnarray*}
\norm{ \A \mathcal P_T \xkh{\mathcal P_T \A^* \A \mathcal P_T}^{-1} }^2  &=&  \sup_{\normf{\mx} \le 1}  \normf{ \A \mathcal P_T \xkh{\mathcal P_T \A^* \A \mathcal P_T}^{-1} \mx }^2 \\
& =&  \sup_{\normf{\mx} \le 1} \nj{ \A \mathcal P_T \xkh{\mathcal P_T \A^* \A \mathcal P_T}^{-1} \mx,  \A \mathcal P_T \xkh{\mathcal P_T \A^* \A \mathcal P_T}^{-1} \mx} \\
& =&  \sup_{\normf{\mx} \le 1} \nj{  \mx, \xkh{\mathcal P_T \A^* \A \mathcal P_T}^{-1} \mx} \\
& =& \norm{\xkh{\mathcal P_T \A^* \A \mathcal P_T}^{-1}} \le \frac2{\nu \sqrt{nL}}.
\end{eqnarray*}

Similarly, the norm of dual certificate $\lambda$ obeys
\begin{eqnarray*}
\norm{\lambda} & \le &  \norm{\lambda'}+ \norm{\A \mathcal P_T \xkh{\mathcal P_T \A^* \A \mathcal P_T}^{-1} \xkh{\vx_0\vx_0^* - \mathcal P_T \Y' }} \\
& \le &  \frac{c' \log n}{\sqrt{nL}}+\frac2{\nu \sqrt{nL}}\cdot \frac{\nu}{8 M^2 \sqrt{n}}  \le \frac{c \log n}{\sqrt{nL}}.
\end{eqnarray*}

\end{proof}

\subsection{Proof of Theorem \ref{mainresult}}
With the above lemma in hand, we are now prepared to establish an upper bound for the PhaseLift program \eqref{mod:con}. The proof is inspired by \cite{kostin}, where the norm $\norm{\mathcal{A}(\Z)}$ is bounded from below by its scalar product with the exact dual certificate.

\begin{proof}[Proof of Theorem \ref{mainresult}]
Note that $\hat \mx$ is the optimal solution to  \eqref{mod:con} and $\mx_0$ is a feasible point. Therefore, one has
\begin{equation} \label{eq:feasibleZ}
\hat \mx \succeq \bm 0, \quad \tr(\hat\mx) \le \tr(\mx_0), \quad \mbox{and} \quad \norm{\A(\hat\mx)-\vy} \le \tau.
\end{equation}
Denote $\hat\mx =\mx_0 +t \Z$ with $t>0$ and $\normf{\Z}=1$. Then we have
\[
\mx_0 +t \Z \succeq \bm 0, \quad \tr(\Z) \le 0,
\]
and 
\begin{equation} \label{eq:Azupp}
t\norm{\A(\Z)}=\norm{\A(\hat\mx)-\vy+\vw} \le \norm{\A(\hat\mx)-\vy} +\norm{\vw} \le 2\tau.
\end{equation}
We claim that $t\le 2$ and the matrix $\Z$ can be decomposed into 
\begin{equation} \label{eq:Zdecom}
\Z=-\beta \vx_0\vx_0^* + \vu \vx_0^* + \vx_0 \vu^* +  \Z_0,
\end{equation}
where $\beta \ge t/4$, $\nj{\vu,\vx_0}=0$,  and $\Z_0 \vx_0=0$ with $\norms{\Z_0}_* \le \beta$.  To prove the theorem, it then suffices to show 
\begin{equation} \label{eq:AZnorm}
\norm{\A(\Z)} \ge  \frac{\sqrt{nL}}{ 4 c \log n} \cdot \beta
\end{equation}
holds with probability at least $1- e^{-\omega}$, where $c$ is the constant as in Lemma \ref{le:exactdual}.
Indeed, if \eqref{eq:AZnorm} holds, then it follows from \eqref{eq:Azupp} that 
\[
\normf{\hat\mx -\mx_0} =\normf{t \Z} =t  \le \min\dkh{2, C \sqrt{\frac{ \tau \log n}{\sqrt{nL}}}},
\]
where $C=4\sqrt{2c}$ is a constant depends on $M,\nu$. This gives the conclusion.

We proceed to prove \eqref{eq:AZnorm}. From Lemma \ref{le:exactdual}, with probability at least $1- 5/6 e^{-\omega}$,  there exists an exact dual certificate pair $(\Y,\lambda)$ such that $\Y=\A^*(\lambda)$ and 
\begin{equation} \label{eq:AZ1}
\mathcal P_T \Y=\vx_0 \vx_0^*, \quad \norm{\mathcal P_{T^\perp} \Y} \le \frac34, \quad \norm{\lambda} \le \frac{c \log n}{\sqrt{nL}}.
\end{equation}
Applying Cauchy-Schwarz inequality, we have
\begin{eqnarray} \label{eq:AZ0}
\norm{\A(\Z)} \ge \frac1{\norm{\lambda}} \Abs{\nj{\A(\Z),\lambda}} &= & \frac1{\norm{\lambda}} \Abs{\nj{\Z,\A^*(\lambda)}} \notag \\
&= & \frac1{\norm{\lambda}} \Abs{\nj{\Z,\Y}}  \notag \\
&\ge&  \frac{\sqrt{nL}}{c\log n} \Abs{\nj{\Z,\Y}} .
\end{eqnarray}
Notice that 
\begin{equation} \label{eq:AZ2}
\nj{\Z,\Y}=\nj{\Z_T,\Y_T}+\nj{\Z_{\perp},\Y_{\perp}},
\end{equation}
where $\Z_T$ and $\Z_{\perp}$ denote the projection of $\Z$ onto tangent space $T$ and its orthogonal complement space $T^\perp$, respectively.
For the first term of the right hand side \eqref{eq:AZ2}, it follows from \eqref{eq:Zdecom} and \eqref{eq:AZ1} that 
\[
\nj{\Z_T,\Y_T}=\nj{-\beta \vx_0\vx_0^* + \vu \vx_0^* + \vx_0 \vu^*, \vx_0\vx_0^*}=-\beta.
\]
Similarly, the second term  of the right hand side \eqref{eq:AZ2} can be estimated as 
\[
\Abs{\nj{\Z_{\perp},\Y_{\perp}}} \le \norms{\Z_{\perp}}_{*} \norm{\Y_{\perp}} =\norms{\Z_0}_{*} \norm{\Y_{\perp}}  \le \frac34 \beta,
\]
where we use the fact that $\norms{\Z_0}_* \le \beta$ and $\norm{\mathcal P_{T^\perp} \Y} \le 3/4$ in the last inequality.
Combining the above three estimates, one has
\begin{equation} \label{eq:AZ3}
\Abs{\nj{\Z,\Y} } \ge \frac14 \beta.
\end{equation}
 Putting \eqref{eq:AZ3} into \eqref{eq:AZ0}, we obtain  \eqref{eq:AZnorm}.

It remains to prove $t \le 2$ and the claim \eqref{eq:Zdecom}. Note that the Hermitian matrix $\Z$ can be  decomposed into 
\begin{equation} \label{eq:mf0}
\Z=\mathcal P_T \Z+ \mathcal P_{T^\perp} \Z=-\beta \vx_0\vx_0^* + \vu \vx_0^* + \vx_0 \vu^* +  \Z_0,
\end{equation}
where $\beta \in \R$, $\nj{\vu,\vx_0}=0$,  and $\Z_0 \vx_0=0$. Here, $T$ is the tangent space to the manifold of all rank-$1$ Hermitian matrices at $\mx_0 := \vx_0 \vx_0^*$ as given in \eqref{eq:Tangent}.
Since $\mx_0 +t \Z \succeq \bm 0$, it gives $\Z_0= \mathcal P_{T^\perp} \xkh{\mx_0 +t \Z} \succeq \bm 0$.  
Combining with  the fact that $\tr(\Z) \le 0$, one has
\begin{equation*} 
\tr(\Z)=-\beta +\tr(\Z_0)=-\beta+\norms{\Z_0}_* \le 0.
\end{equation*}
Therefore, it holds
\begin{equation} \label{eq:Ztrace}
\norms{\Z_0}_* \le \beta.
\end{equation}
 Denote $\vu=\gamma \tilde{\vu}$ with $\norm{\tilde{\vu}}=1$ and $\gamma\ge 0$. We further set
 \[
 \Z_0=\tilde{\vu}\tilde{\vu}^* \Z_0 \tilde{\vu}\tilde{\vu}^*+\Z'_0,
 \]
where $\Z'_0 \in \C^{n\times n}$. One can easily check that $\vw^*  \Z'_0 \vw =0$ for all $\vw \in \mbox{span}\dkh{\vx_0, \tilde{\vu}}$. The matrix $\Z_0$ can be written as
\begin{equation} \label{eq:mf00}
\Z=-\beta \vx_0\vx_0^* + \gamma \tilde{\vu} \vx_0^* + \gamma \vx_0 \tilde{\vu}^* + \zeta \tilde{\vu} \tilde{\vu}^* + \Z'_0,
\end{equation}
where $ \zeta=\tilde{\vu}^* \Z_0 \tilde{\vu}\ge 0$.
Since $\vx_0\vx_0^* +t \Z \succeq \bm 0$, thus for any $\vw \in \mbox{span}\dkh{\vx_0, \tilde{\vu}}$, it holds
\[
\vw^* \xkh{\vx_0\vx_0^* +t \Z} \vw=\vw^*\xkh{(1- t\beta) \vx_0\vx_0^* + t \gamma \tilde{\vu} \vx_0^* + t \gamma \vx_0 \tilde{\vu}^* +t \zeta \tilde{\vu} \tilde{\vu}^*} \vw \ge 0.
\]
Note that $\vx_0$ is orthogonal to $\tilde{\vu}$, and $\norm{\vx_0}=\norm{\tilde{\vu}}=1$. It implies 
\[
\left( \begin{array}{cc} 1- t\beta & t\gamma\\ t \gamma & t\zeta \end{array} \right) \succeq \bm 0.
\]
This gives 
\[
\zeta \ge 0, \quad  1- t\beta \ge 0 \quad \mbox{and} \quad (1- t\beta) \cdot  t \zeta \ge t^2\gamma^2.
\]
Thus,
\[
\gamma^2 \le \frac{(1- t\beta)  \zeta}{t} \le \frac{\beta}{t},
\]
where the last inequality comes from the fact that $\zeta=\tilde{\vu}^* \Z_0 \tilde{\vu} \le \norm{\Z_0} \le \beta$ due to \eqref{eq:Ztrace}. Finally, according to \eqref{eq:mf0}, one has
\begin{equation} \label{eq:Znorm}
1=\normf{\Z}^2=\beta^2+2\gamma^2+\normf{\Z_0}^2 \le \beta^2+2\gamma^2+\norms{\Z_0}_*^2 \le 2\beta^2+2\cdot\frac{\beta}{t}.
\end{equation}
This implies
\[
\beta\ge t \xkh{\frac12-\beta^2}= \frac{1}{2}t- t\beta*\beta \ge  \frac{1}{2}t- \beta,
\]
which gives $\beta \ge t/4$. Here, the last inequality follows from the fact that $t\beta \le 1$. Moreover, combining $t\beta \le 1$ and  $\beta \ge t/4$ together gives $t\le 2$.
This complete the claim \eqref{eq:Zdecom}.
\end{proof}

\subsection{Proof of Theorem \ref{lowbound}}

\begin{proof}[Proof of Theorem \ref{lowbound}]
For any fixed $\vx_0 \in \Cn$, let the noise vector $\vw=-\A(\mx_0)$ and the parameter $\tau= \norm{\A(\mx_0)}$, where $\mx_0=\vx_0\vx_0^*$. Then we have 
\[
\vy=\A(\mx_0) +\vw=\bm 0.
\]
Therefore, the optimal solution to \eqref{mod:con} is $\hat \mx=\bm 0$ and $\normf{\hat\mx-\mx_0}=\normf{\mx_0}$.
We claim that, with probability at least   $1-4Ln^{-10}$, it holds $\tau= \norm{\A(\mx_0)} \le \sqrt{c_2 nL \log n}\normf{\mx_0}$ for a constant $c_2$ depending only on $M, \nu$. This immediately gives the conclusion that
\[
\normf{\hat\mx-\mx_0} =\normf{\mx_0} \ge \frac{C_1 \tau}{\sqrt{nL \log n}}.
\]
Here, $C_1=1/\sqrt{c_2}$ is a constant depending only on $M, \nu$.

To prove the claim,  we first show that  with probability at least   $1-2Ln^{-10}$, it holds
\begin{equation} \label{eq:maxfdl}
\max_{1\le k \le n, 1\le l \le L} \quad \abs{\vf_k^* \md_l^* \vx_0} \le  \sqrt{c_1 \log n} \norm{\vx_0}
\end{equation}
for some universal constant  $c_1>0$.
Indeed, for any fixed $1\le k \le n, 1\le l \le L$, note that 
\[
\vf_k^* \md_l^* \vx_0=\sum_{j=1}^n \bar{f}_{k,j} x_{0,j} \bar{d}_{l,j},
\]
and $|f_{k,l}|=1$. Since $\bar{d}_{l,j}, j=1,\ldots,n$ are i.i.d. centered sub-Gaussian random variables with the maximum sub-Gaussian norm $M$.
Then for any $c_1>0$, the Hoeffding's inequality gives
\begin{eqnarray*}
\PP\xkh{\abs{\vf_k^* \md_l^* \vx_0}   \ge   \sqrt{c_1 \log n} \norm{\vx_0} } & \le &  2\exp\xkh{-c \cdot \frac{ c_1 \log n \norm{\vx_0}^2}{M^2 \sum_{j=1}^n |\bar{f}_{k,j} x_{0,j}|^2} } \\
&= & 2\exp\xkh{-c \cdot \frac{c_1 \log n }{M^2}},
\end{eqnarray*}
where $c>0$ is a universal constant. Taking the constant $c_1=11M^2/c$ and taking the union bound over all $1\le k \le n, 1\le l \le L$, we obtain \eqref{eq:maxfdl}.  Under the event \eqref{eq:maxfdl}, observe that 
\begin{eqnarray} \label{eq:Ax0}
 \norm{\A(\mx_0)}^2=\sum_{l=1}^L \sum_{k=1}^n \abs{\vf_k^* \md_l^* \vx_0}^4  & \le &  c_1 \log n \norm{\vx_0}^2\cdot  \sum_{l=1}^L \sum_{k=1}^n \abs{\vf_k^* \md_l^* \vx_0}^2 \notag\\
 &= & c_1 n\log n \norm{\vx_0}^2\cdot  \sum_{l=1}^L \vx_0^* \md_l \md_l^* \vx_0,
\end{eqnarray}
where the second equality comes from the fact that $\sum_{k=1}^n \vf_k^* \vf_k=nI_n$. Finally, noting that $\vx_0^* \md_l \md_l^* \vx_0-\nu \norm{\vx_0}^2$ are i.i.d.  mean zero subexponential random variables with the maximum subexponential norm $2M^2 \norm{\vx_0}^2$, the Bernstein's inequality implies 
\begin{eqnarray*}
&& \PP\xkh{\Abs{\sum_{l=1}^L \xkh{\vx_0^* \md_l \md_l^* \vx_0- \nu \norm{\vx_0}^2}} \ge t} \\
&\le&  2\exp\xkh{-c \min\xkh{\frac{t^2}{4L M^4 \norm{\vx_0}^4},\frac{t}{2M^2 \norm{\vx_0}^2}}}.
\end{eqnarray*}
Taking $t=\nu  L \norm{\vx_0}^2$, we obtain that if $L\ge C_0 \log n$ for some constant depending only on $\mu,M$, with probability at least $1-2n^{-10}$, it holds
\begin{equation} \label{eq:x0Dl}
\sum_{l=1}^L\vx_0^* \md_l \md_l^* \vx_0 \le 2\nu L \norm{\vx_0}^2.
\end{equation}
Putting \eqref{eq:x0Dl} into \eqref{eq:Ax0}, we have
\[
 \norm{\A(\mx_0)} \le \sqrt{c_2 nL\log n} \norm{\vx_0}^2= \sqrt{c_2 nL\log n} \norm{\mx_0}.
\]
Here, $c_2=2c_1 \nu$ is a constant depending only on $M, \nu$. This gives the claim.
\end{proof}

\subsection{Proof of Theorem \ref{UpperGaussian}}

\begin{proof}[Proof of Theorem \ref{UpperGaussian}]
Since $\hat \mx$ is the optimal solution to  \eqref{model:gau} with $R=\tr(\mx_0)$ and $\mx_0$ is a feasible point, it holds 
\begin{equation} \label{eq:gaufeas}
\hat \mx \succeq \bm 0, \quad \tr(\hat\mx) \le \tr(\mx_0), \quad \mbox{and} \quad \norm{\A(\hat\mx)-\vy} \le \norm{\A(\mx_0)-\vy}.
\end{equation}
Denote $\hat\mx =\mx_0 +t \Z$ with $t>0$ and $\normf{\Z}=1$.  
Then the proof of Theorem \ref{UpperGaussian} is similar to that of Theorem \ref{mainresult}, and  the only difference is the upper bound for $\norm{\A(\Z)}$. Specifically, noting that $\vy=\A(\mx_0)+\vw$, it then follows from \eqref{eq:gaufeas} that 
\[
\norm{\A(\hat\mx)-\vy} =\norm{t\A(\Z)-\vw}\le  \norm{\A(\mx_0)-\vy}= \norm{\vw}.
\]
Therefore, it holds
\[
t \norm{\A(\Z)}^2 \le 2\nj{\A(\Z),\vw}.
\]
For the right hand side, 
\[
\nj{\A(\Z),\vw}= \sum_{l=1}^L \sum_{k=1}^n  \vf_k^* \md_l^* \Z \md \vf_k   w_{k,l}.
\]
Since $w_{k,l}$ are i.i.d. mean zero sub-Gaussian random variable with sub-Gaussian norm $\sigma$, the Hoeffding's inequality gives
\[
\PP\xkh{\Abs{\sum_{l=1}^L \sum_{k=1}^n  \vf_k^* \md_l^* \Z \md \vf_k   w_{k,l}}\ge t } \le 2\exp\xkh{-c\frac{t^2}{\sigma^2  \sum_{l=1}^L \sum_{k=1}^n  \abs{\vf_k^* \md_l^* \Z \md \vf_k}^2}}.
\]
Here, $c>0$ is a universal constant.
According to Lemma \ref{le:opnorm}, it holds
\[
\sum_{l=1}^L \sum_{k=1}^n  \abs{\vf_k^* \md_l^* \Z \md \vf_k}^2=\norm{\A(\Z)}^2 \le M^4 n^2 L.
\]
Taking $t=c_3 \omega \sigma M^2 n \sqrt{L}$ for some universal constants $\omega, c_2>0$, one has
\[
\Abs{\nj{\A(\Z),\vw}} \le c_3 \omega \sigma M^2 n \sqrt{L} 
\]
with probability at least $1- 1/6 e^{-\omega}$.  Combining with \eqref{eq:AZnorm}, we arrive at the conclusion that 
\[
\normf{\hat\mx -\mx_0} =\normf{t \Z} =t  \le C_2 \sqrt{ \frac{ \sigma \log^2 n}{\sqrt L} }.
\]
Here, $C_2>0$ is a constant depending on $\omega, M$.

\end{proof}

\subsection{Proof of Theorem \ref{th:minimax}}
In this subsection, we aim to prove the minimax lower bound stated in Theorem \ref{th:minimax}. For notational simplicity, we denote \(\va_{k,l} := \md_l \vf_k\), where \(\md_l\) and \(\vf_k\) are defined as in \eqref{eq:measu}. Furthermore, we denote by \(\mathbb{P}_{\vy|\vw}\) the likelihood of \(y_{k,l} \sim \mathcal{N}(\lvert \va_{k,l}^* \vw \rvert^2, \sigma^2)\) conditional on \(\{\vd_l\}_{l=1}^L\).
For any two probability measures \(P_1\) and \(P_2\), the Kullback-Leibler (KL) divergence between them is defined as
\[
D_{KL}(P_1, P_2) = \int \log \left( \frac{d P_1}{d P_2} \right) d P_1.
\]
The key technical tool we use to prove Theorem \ref{th:minimax} is Tsybakov’s minimax lower bound \cite{Tsybakov}, which involves constructing a finite set of hypotheses such that any two are well-separated while their corresponding distributions exhibit small mutual KL divergence, as described below.

\begin{proposition}\cite[Th. 2.7]{Tsybakov}  \label{pro:Fano}
Let \(\mathcal{P}\) be a set of distributions, and let \(X_1, \ldots, X_m\) be samples drawn from some distribution \(P \in \mathcal{P}\). Let \(\theta(P)\) be a function of \(P\) in a metric space with metric \(d\), and let \(\hat{\theta} = \hat{\theta}(X_1, \ldots, X_m)\) denote an estimator of \(\theta(P)\).
Assume that \(\{P_0, P_1, \ldots, P_N\} \subset \mathcal{P}\), where \(N \geq 3\) and \(P_0\) is absolutely continuous with respect to each \(P_j\). Suppose further that
\[
\frac{1}{N} \sum_{j=1}^N D_{KL}(P_j, P_0) \leq \frac{\log N}{16}.
\]
Then, it holds that
\[
\inf_{\hat{\theta}} \sup_{P \in \mathcal{P}} \mathbb{E}_P \left[ d(\hat{\theta}, \theta(P)) \right] \geq \frac{s}{16},
\]
where 
\[
s = \min_{0 \leq j < k \leq N} d(\theta(P_j), \theta(P_k)).
\]
\end{proposition}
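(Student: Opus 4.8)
The plan is to prove this Tsybakov-type minimax lower bound by the standard two-step reduction that underlies all Fano-type arguments: first reduce the estimation problem to a multiple hypothesis test among $P_0,\ldots,P_N$, converting estimation error into testing error through the separation $s$; and then lower bound the minimax testing error by Fano's inequality, which is precisely where the Kullback--Leibler hypothesis enters. Throughout, the absolute-continuity assumption is what guarantees the relevant KL divergences are finite and the argument is well-defined.

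For the reduction step, I would fix an arbitrary estimator $\hat\theta$ and associate to it the minimum-distance test $\psi^* := \argmin{0 \le j \le N} d(\hat\theta, \theta(P_j))$. If the true distribution is $P_j$ but $\psi^*$ returns $k \ne j$, then by the triangle inequality together with the defining minimality of $\psi^*$,
\[
s \le d(\theta(P_j), \theta(P_k)) \le d(\hat\theta, \theta(P_j)) + d(\hat\theta, \theta(P_k)) \le 2\, d(\hat\theta, \theta(P_j)),
\]
so the event $\{\psi^* \ne j\}$ forces $d(\hat\theta, \theta(P_j)) \ge s/2$. Markov's inequality then yields $\E_{P_j}[d(\hat\theta, \theta(P_j))] \ge (s/2)\, P_j(\psi^* \ne j)$, and maximizing over $j$ gives
\[
\sup_{P \in \mathcal{P}} \E_P[d(\hat\theta, \theta(P))] \ge \max_{0\le j \le N} \E_{P_j}[d] \ge \frac{s}{2} \max_{0 \le j \le N} P_j(\psi^* \ne j) \ge \frac{s}{2}\, p_{e,N},
\]
where $p_{e,N} := \inf_{\psi} \max_{0\le j \le N} P_j(\psi \ne j)$ is the minimax error probability over all tests valued in $\{0, \ldots, N\}$. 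Since $\hat\theta$ was arbitrary, it remains only to bound $p_{e,N}$ below by a universal constant.

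For the Fano step, I would introduce an index $J$ uniform on $\{0, \ldots, N\}$ and $X \sim P_J$. Fano's inequality gives, for every test $\psi$,
\[
\bar p := \frac{1}{N+1}\sum_{j=0}^N P_j(\psi \ne j) \ge 1 - \frac{I(J;X) + \log 2}{\log(N+1)}.
\]
The mutual information is controlled by the identity $I(J;X) = \tfrac{1}{N+1}\sum_{j} D_{KL}(P_j, \bar P)$ with $\bar P = \tfrac1{N+1}\sum_{j} P_j$, together with the variational fact that $\bar P$ minimizes $Q \mapsto \tfrac1{N+1}\sum_j D_{KL}(P_j, Q)$; replacing $\bar P$ by $P_0$ and dropping the vanishing $j=0$ term gives
\[
I(J;X) \le \frac{1}{N+1}\sum_{j=1}^N D_{KL}(P_j, P_0) \le \frac{1}{N}\sum_{j=1}^N D_{KL}(P_j, P_0) \le \frac{\log N}{16}.
\]
Using $N \ge 3$ (so $\log(N+1) \ge 2\log 2$ and $\log N \le \log(N+1)$) yields $\bar p \ge 1 - 1/16 - 1/2 = 7/16 \ge 1/8$. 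Since the maximum dominates the average, $p_{e,N} \ge \bar p \ge 1/8$, and substituting into the reduction bound gives $\sup_P \E_P[d] \ge (s/2)(1/8) = s/16$, as claimed.

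The routine but delicate part is the constant bookkeeping in the final paragraph: one must select the particular version of Fano's inequality and the convexity bound on $I(J;X)$ so that the assumed threshold $\log N/16$ propagates to exactly the factor $1/8$ on the error probability, which then produces the stated $s/16$. The only genuine subtlety is the mutual-information estimate --- replacing the optimal mixture reference $\bar P$ by $P_0$ via the variational characterization of $\bar P$, which is what lets the one-sided KL hypothesis $\tfrac1N\sum_j D_{KL}(P_j,P_0) \le \log N/16$ be used directly. Everything else is the verbatim reduction-to-testing scheme.
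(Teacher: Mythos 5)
Your proof is correct, and the constant bookkeeping checks out: the minimum-distance test converts the event $\{\psi^*\ne j\}$ into $\{d(\hat\theta,\theta(P_j))\ge s/2\}$, Markov's inequality gives risk at least $(s/2)\,p_{e,N}$, and classical Fano combined with $I(J;X)\le\frac{1}{N+1}\sum_{j=1}^N D_{KL}(P_j,P_0)\le\frac{\log N}{16}$ (via the variational property of the mixture $\bar P$) and $\log(N+1)\ge 2\log 2$ for $N\ge 3$ yields an average test error of at least $1-\frac{1}{16}-\frac{1}{2}=\frac{7}{16}\ge\frac{1}{8}$, hence the stated bound $s/16$. Note, however, that there is nothing in the paper to compare against: Proposition \ref{pro:Fano} is quoted from Tsybakov's book and used as a black box in the proof of Theorem \ref{th:minimax}, so the paper contains no proof of it. Relative to Tsybakov's own argument for his Theorem 2.7, which proceeds through a refined information-theoretic bound on the minimax error probability that avoids the additive $\log 2$ slack of textbook Fano (giving $p_{e,N}\ge\frac{\sqrt N}{1+\sqrt N}\bigl(1-2\alpha-\sqrt{2\alpha/\log N}\bigr)$ with $\alpha=1/16$), your plain-Fano route is more elementary and still clears the $1/8$ threshold needed to produce the constant $1/16$ after the factor-of-two loss in the reduction. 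Two minor points worth flagging: the absolute-continuity hypothesis as transcribed in the paper ($P_0\ll P_j$) points the wrong way for $D_{KL}(P_j,P_0)$ to be finite --- one needs $P_j\ll P_0$, as in Tsybakov's original statement, and otherwise the KL hypothesis holds only vacuously since the divergence is $+\infty$; and your selector $\psi^*$ should be fixed with a measurable tie-breaking rule, a standard but quotable caveat.
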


The following lemma plays a key role in constructing the required hypotheses.
\begin{lemma} \label{le:seped}
Suppose that the masks $\dkh{\vd_l}_{l=1}^L$ satisfy Assumption \ref{assump:mask} with the number of masks $L \le  C_0 \log^k n$ for some fixed interger $k\ge 1$ and constant $C_0 > 0$ independent of $n$, where $n$ is sufficiently large.  For any $\vx_0 \in \R^n$, it holds with probability at least $1-2\exp(-n/40)-1/\log n$, there exists a collection $\mathcal M$ of $N=\exp(n/30)$ distinct vectors obeying the following properties:
\begin{itemize}
\item[\rm{(i)}] $\vx_0 \in \mathcal M$;
\item[\rm (ii)] for all $\vw_k, \vw_l \in \mathcal M$, 
\[
 \frac{\sigma}{160\sqrt{6}M \norm{\vx_0}}  \cdot \sqrt{\frac{1}{L \log^2 n}} \le \norm{\vw_k-\vw_l} \le \frac{\sigma}{40\sqrt{6}M \norm{\vx_0}} \cdot \sqrt{\frac{1}{L \log^2 n}};
\]
\item[\rm (iii)] for all $\vw\in \mathcal M$, 
\[
 \max_{ 1\le k\le n, 1\le l\le L } \abs{\va_{k,l}^* (\vw-\vx_0)} \le \frac{\sigma}{80\sqrt{3}}\cdot \sqrt{\frac{1}{L\log n \norm{\vx_0}^2}}.
\]
\end{itemize}
Here, $M$ is  defined in \eqref{assump:mask}.
\end{lemma}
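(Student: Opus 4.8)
The plan is to realise the hypotheses as a \emph{randomly perturbed packing} centred at $\vx_0$ and to decouple the three requirements across independent sources of randomness. Fix a scale $\rho>0$ and draw $\omega^{(1)},\dots,\omega^{(N')}\in\{-1,+1\}^n$ i.i.d.\ Rademacher vectors, with pool size $N'=2\exp(n/30)$. Take the candidate points $\vw^{(i)}:=\vx_0+\rho\,\omega^{(i)}$ and put $\vx_0$ itself into $\mathcal M$, so that (i) is immediate and $\vw=\vx_0$ satisfies (iii) trivially. Since $\norm{\omega^{(i)}}=\sqrt n$ and $\vw^{(i)}-\vw^{(j)}=\rho(\omega^{(i)}-\omega^{(j)})$ with $\norm{\omega^{(i)}-\omega^{(j)}}^2=4\,d_H(\omega^{(i)},\omega^{(j)})$ (Hamming distance), the three properties reduce to a Hamming concentration statement for (ii), a uniform incoherence estimate $\max_{k,l}\abs{\va_{k,l}^*\omega^{(i)}}\le B/\rho$ for (iii) (with $B$ the right-hand side of (iii)), and the inclusion of $\vx_0$ for (i). I would pin $\rho$ at the lower end of the window that (ii) permits, so that the common scale $\rho\sqrt n$ lands inside the interval $[\delta_{\min},\delta_{\max}]$ dictated by (ii), whose endpoints differ by a factor $4$ and thus leave room for the $O(1)$ spread of the pairwise distances.

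I would first record a high-probability mask event $\mathcal E_1$: writing $\sum_j\abs{d_{l,j}}^2-n\nu$ as a sum of i.i.d.\ mean-zero subexponential variables, Bernstein's inequality and a union bound over the $L\le C_0\log^k n$ masks give $\sum_j\abs{d_{l,j}}^2\le\tfrac32 n\nu$ for all $l$ with probability at least $1-e^{-n/40}$; this controls the variance proxy used below. Separation, property (ii), comes purely from the $\omega$-randomness: for a fixed pair $d_H\sim\mathrm{Bin}(n,1/2)$, so Hoeffding gives $\PP(\abs{d_H-n/2}\ge n/4)\le 2e^{-n/8}$, and a union bound over the $\binom{N'}{2}\le 2e^{n/15}$ pairs shows that with probability at least $1-e^{-n/40}$ all pool pairs obey $d_H\in[n/4,3n/4]$, i.e.\ $\norm{\vw^{(i)}-\vw^{(j)}}\in[\rho\sqrt n,\rho\sqrt{3n}]$. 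Together with $\norm{\vw^{(i)}-\vx_0}=\rho\sqrt n$, every pairwise distance (including those involving $\vx_0$) lies in $[\rho\sqrt n,\rho\sqrt{3n}]$, and the calibration of $\rho$ places this interval inside the (ii)-window.

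The crux is property (iii), where a naive union bound is hopeless. For fixed $(k,l)$ and fixed masks, $\va_{k,l}^*\omega^{(i)}=\sum_j\overline{f_{k,j}d_{l,j}}\,\omega^{(i)}_j$ is a Rademacher sum with coefficients of modulus $\le M$ and $\sum_j\abs{\overline{f_{k,j}d_{l,j}}}^2=\sum_j\abs{d_{l,j}}^2\le\tfrac32 n\nu$ on $\mathcal E_1$, so Hoeffding only yields the \emph{polynomial} tail $\PP(\abs{\va_{k,l}^*\omega^{(i)}}\ge B/\rho)\le 4\,n^{-\alpha}$ at the required scale $B/\rho=\Theta(M\sqrt{n\log n})$, with exponent $\alpha=c_0 M^2/\nu$ for an explicit $c_0>0$ fixed by the numerical constants in (ii)--(iii); choosing $\rho$ at the small end of the window makes $\alpha>1$ (indeed $M^2\ge\nu$ always). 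Because $N'$ is exponential in $n$, one cannot union-bound over all candidates. Instead I would \emph{oversample and prune}: calling $\omega^{(i)}$ bad if it violates (iii), its bad probability is at most $\sum_{k,l}4n^{-\alpha}=4Ln^{1-\alpha}$, so conditionally on $\mathcal E_1$ one has $\E[\#\text{bad}]\le 4LN'n^{1-\alpha}$, and Markov's inequality gives $\PP(\#\text{bad}\ge N'/2)\le 8Ln^{1-\alpha}\le 1/\log n$ for $n$ large, using $\alpha>1$. On the complement I discard the bad candidates, keeping at least $N'/2=\exp(n/30)$ points satisfying (iii); being a subset of the pool they still obey (ii), and adjoining $\vx_0$ gives (i). Intersecting $\mathcal E_1$ with the separation and pruning events produces the claimed probability $1-2e^{-n/40}-1/\log n$.

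The main obstacle is precisely this tension in (iii): the hypotheses must be exponentially numerous (to feed Proposition \ref{pro:Fano}) yet each displacement $\rho\omega^{(i)}$ must stay incoherent with all $nL$ measurement vectors at the delicate scale $\sqrt{n\log n}$, where only polynomial tails are available, so a direct union bound over $\exp(n/30)$ candidates fails. The oversample-and-prune device resolves this and is also the source of the $1/\log n$ term; the remaining care is purely bookkeeping --- calibrating $\rho$ so that the single factor-$4$ window in (ii) simultaneously accommodates the bounded spread of the Hamming distances and fixes the incoherence scale $B/\rho$ large enough to push the Hoeffding exponent $\alpha$ above $1$.
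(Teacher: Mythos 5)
Your proposal is correct and follows essentially the same route as the paper's proof: a random cloud of perturbations of $\vx_0$ at a calibrated scale, norm concentration plus a union bound over pairs for (ii), and---crucially---the same oversample-and-prune device for (iii) (per-candidate polynomial failure probability over the $nL$ measurements, then Markov's inequality to discard bad candidates), which in both arguments is exactly the source of the $1/\log n$ term. The only differences are cosmetic: the paper uses Gaussian rather than Rademacher perturbations (so the (iii) tail is an exact conditional Gaussian bound combined with the deterministic estimate $\norm{\va_{k,l}} \le M\sqrt{n}$, which also shows your Bernstein event $\mathcal E_1$ is unnecessary --- the worst-case bound $\sum_j \abs{d_{l,j}}^2 \le nM^2$ already yields an exponent $\alpha=2$), and it draws a pool of size $\exp(n/20)$ before pruning down to $\exp(n/30)$.
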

\begin{proof}
We first construct a set $\mathcal M_1$ of exponentially many vectors centered around $\vx_0$ such that properties (i) and (ii) hold. Then, we verify that a subset $\mathcal M \subset \mathcal M_1$, which also contains exponentially many vectors, satisfies  property (iii).  To this end,  define a random vector 
\[
\vw:=\vx_0+\frac{\sigma}{80\sqrt{6}M}\cdot \sqrt{\frac{1}{nL \log^2n \norm{\vx_0}^2}} \cdot \vz,
\]
where $\vz \sim \mathcal N(0,I_n)$ and $M$ is the parameter given in \eqref{assump:mask}.
 The collection $\mathcal M_1$ is constructed by generating $N=\exp(n/20)$ independent copies $\vw_k$ of $\vw$. Notice that for any $\vw_k, \vw_l\in \mathcal M_1$, 
\[
\vw_k-\vw_l=\frac{\sigma}{80\sqrt{6}M}\cdot  \sqrt{\frac{1}{nL \log^2n \norm{\vx_0}^2}}  \xkh{\vz_k-\vz_l},
\]
where  $\vz_k$ and $ \vz_k$ are independent standard Gaussian random vectors.  By \cite[Theorem 3.1.1]{Vershynin2018}, we have
\[
\PP\dkh{0.5 \sqrt{n} \le \norm{\xkh{\vz_k-\vz_l}} \le 2 \sqrt{n} } \ge 1-2\exp(-n/8).
\]
Applying the union bound over all pairs $\vw_k, \vw_l$, it follows that 
\[
\frac{\sigma}{160\sqrt{6}M} \cdot \sqrt{\frac{1}{L\log^2n \norm{\vx_0}^2}}  \norm{\vw_k-\vw_l} \le \frac{\sigma}{40\sqrt{6}M}\cdot  \sqrt{\frac{1}{L\log^2n \norm{\vx_0}^2}}, \quad \forall k\neq l
\]
holds with probability at least 
\[
1-2\exp(-n/8) \cdot  N^2 \ge 1-2\exp(-n/40).
\]
This gives property (ii).

For property (iii),  note  that when $\vz \sim  \mathcal N(0,I_n)$, conditional on $\dkh{\va_{k,l}}$, the standard Gaussian concentration inequality implies
\[
\PP\dkh{\abs{\va_{k,l}^* \vz} \ge \sqrt{2\log n} \norm{\va_{k,l}}} \le \frac1{n^2}.
\]
Taking the union bound for all $1\le k\le n, 1\le l\le L$, we have
\[
\PP\dkh{ \max_{ 1\le k\le n, 1\le l\le L }\abs{\va_{k,l}^* \vz}  \le \sqrt{2\log n} \norm{\va_{k,l}}} \ge 1- \frac{L}{n}.
\]
Since $\norm{\va_{k,l}} \le M \sqrt{n}$,  it follows that  for any $\vw\in \mathcal M_1$,  with probability at least $1- \frac{L}{n}$, 
\begin{equation} \label{eq:eaw}
 \max_{ 1\le k\le n, 1\le l\le L } \abs{\va_{k,l}^* (\vw-\vx_0)} \le \frac{\sigma}{80\sqrt{3}}\cdot   \sqrt{\frac{1}{L \log n \norm{\vx_0}^2}}.
\end{equation}
Define the subset
\[
\mathcal M=\dkh{\vw \in \mathcal M_1 :  \vw\;  \text{ satisfies the inequality  \eqref{eq:eaw} }}.
\]
Next, we show that $\mathcal M$ still contains exponentially many vectors. For each $\vw_k \in \mathcal M_1$, define a Bernoulli random variable $\xi_k$ such that  $\PP(\xi_k=0)$ if \eqref{eq:eaw} holds for $\vw_k$,  and $\PP(\xi_k=1)$ otherwise.  
Note that $\E \xi_k=L/n$. The Markov's inequality gives 
\[
\PP\dkh{\sum_{k=1}^N \xi_k \ge \frac{NL \log n}{n} }\le \frac{1}{\log n}.
\]
Therefore, with probability $1-1/\log n$, there exist at least 
\[
N-\frac{NL \log n}{n}=N\xkh{1-\frac{L}{n}} = \exp\xkh{\frac{n}{20}}\xkh{1-\frac{\log^k n}{n}} \ge \exp\xkh{\frac{n}{30}}
\]
vectors in $\mathcal M_1$ satisfies the inequality \eqref{eq:eaw}, establishing condition (iii).
\end{proof}

Now we are ready to prove the minimax optimality.

\begin{proof}[Proof of Theorem \ref{th:minimax}]
According to Lemma \ref{le:seped}, for any $\vx_0 \in \C^n$ with $\norm{\vx_0}\ge \sigma$, with probability at least $1-2\exp(-n/40)-1/\log n$, there exists a collection $\mathcal M$ of $N=\exp(n/30)$ distinct vectors obeying conditions (i), (ii), and (iii).  For each $\vw_k \in \mathcal M \setminus \dkh{\vx_0}$, let $P_k$ denote the distribution of $\PC{\vy}{\vw_k}$, and let $P_0$ denote the distribution of $\PC{\vy}{\vx_0}$. In the Gaussian model where the noises $w_{k,l}\sim \mathcal N(0,\sigma^2)$, it follows that 
\begin{equation} \label{eq:conden}
\PC{\vy}{\vw_k} \sim \mathcal N(\abs{A\vw_k}^2, \sigma^2 I_m),
\end{equation}
where $A \in \C^{m\times n}$ is a matrix whose rows are $\va_{k,l}$,  and  $\abs{\cdot}$ acts entrywise. Thus,  the density function satisfies
\[
\PC{\vy=\vx}{\vw_k}=\prod_{1\le k\le n, 1\le l\le L} \frac{1}{\sqrt{2\pi} \sigma} \exp\xkh{-\frac{\xkh{x_{k,l}- \abs{\va_{k,l}^* \vw_k}^2}^2}{2\sigma^2}},
\]
where $\vx=\dkh{x_{k,l}}$.
Condition on $\dkh{\va_{k,l}}$, the KL divergence between $\PC{\vy}{\vw_k}$ and $\PC{\vy}{\vw_l}$ obeys 
\begin{eqnarray*}
&& D_{KL}(P_k,P_0) = \int \log\xkh{\frac{d P_k}{d P_0}} d P_k \\
&=& \E_{P_k} \zkh{ \log \frac{\prod_{k,l } \frac{1}{\sqrt{2\pi} \sigma} \exp\xkh{-\frac{\xkh{x_{k,l}- \abs{\va_{k,l}^* \vw_k}^2}^2}{2\sigma^2}}}{\prod_{k,l } \frac{1}{\sqrt{2\pi} \sigma} \exp\xkh{-\frac{\xkh{x_{k,l}- \abs{\va_{k,l}^* \vx_0}^2}^2}{2\sigma^2}}}} \\
&=& \frac{1}{2\sigma^2} \sum_{1\le k\le n, 1\le l\le L} \Abs{\abs{\va_{k,l}^* \vw_k}^2-\abs{\va_{k,l}^* \vx_0}^2 }^2 \\
& \le & \frac{1}{2\sigma^2} \sum_{1\le k\le n, 1\le l\le L}  \Abs{\va_{k,l}^* \xkh{\vw_k-\vx_0}}^2 \xkh{2  \Abs{\va_{k,l}^* \vx_0}+ \Abs{\va_{k,l}^* \xkh{\vw_k-\vx_0}}}^2 \\
& \le & \frac{1}{2\sigma^2} \cdot (nL) \cdot  \frac{\sigma^2}{1820 L \log n \norm{\vx_0}^2} \cdot \xkh{2 \sqrt{6 \log n} \norm{\vx_0}+ \frac{\sigma}{40\sqrt{3}}\cdot \sqrt{\frac{1}{L\log n \norm{\vx_0}^2}} }^2 \\
& \le & \frac{n}{480},
\end{eqnarray*}
where the third line follows from the Gaussian random distribution \eqref{eq:conden}, the forth line comes from the triangle inequality, and the fifth line arises from property (iii) of Lemma \ref{le:seped}, inequality \eqref{eq:maxfdl} (with the constant $6$ taken  for simplicity),  and the fact that $\norm{\vx_0} \ge \sigma$.  Consequently, 
\[
\frac{1}{N-1} \sum_{\vw_k \in \mathcal M \setminus \dkh{\vx_0}} D_{KL}(P_k,P_0)  \le  \frac{n}{480} \le \frac{\log(N-1)}{16}.
\]
By applying Proposition \ref{pro:Fano}, Tsybakov minimax lower bound yields 
\[
\inf_{\hat\vx} \sup_{\vx_0 \in \mathcal M} \E\zkh{\norm{\hat\vx-\vx_0} | \dkh{\va_{k,l}}} \ge \frac{c_0 \sigma}{ \norm{\vx_0}} \cdot \sqrt{\frac1{L\log^2n}}.
\]
Here, $c_0$ is a constant depending only on $M$. Finally, since all vectors $\vw_k \in \mathcal M$ are clustered around $\vx_0$,  any reasonable estimator satisfies $\mbox{dist}(\hat\vx,\vx_0)=\norm{\hat\vx-\vx_0}$. This completes the proof. 
\end{proof}

\section{Numerical experiments} \label{sec:4}
In this section, we present numerical experiments to verify that the recovery bound $\norm{\vw}/\sqrt m$ for phase retrieval from CDP  is rate-optimal.  In our experiments, the target vector $\vx_0 \in \C^n$ is drawn randomly from the standard complex Gaussian distribution, i.e., $\vx_0 \sim \mathcal{N}(0, I_n) + i \mathcal{N}(0, I_n)$.  The noise vector $\vw \in \R^{nL}$ is a real Gaussian random vector with entries $w_{k,l} \sim \mathcal{N}(0, 1)$. We perform simulations using octanary masks as \eqref{eq:octanary}, as well as ternary masks \cite{CDPLi}, where the mask entries are distributed as 
\[
d \sim \left\{
\begin{array}{rl}
1 & \text{with prob. } 1/4 \\
0 & \text{with prob. } 1/2 \\
-1 & \text{with prob. } 1/4
\end{array}
\right. .
\]
%
We fix $n = 128$ and vary the number of masks $L$ from 5 to 50. For each fixed $L$, we run 100 trials and compute the average ratio $\rho_m$ defined as
\[
\rho_m = \frac{\normf{\hat{\mx} - \mx_0}}{\norm{\vw} / \sqrt{m}}.
\]
The PhaseLift program is implemented using CVX toolbox.
Figure \ref{figure:1} depicts the values of $\rho_m$ against the number of masks $L$.  It can be observed that $\rho_m$ stabilizes around approximately $0.23$ and $0.45$ for octanary masks and ternary masks, respectively.
This demonstrate that the recovery bound $\norm{\vw}/\sqrt m$ is rate-optimal.

\begin{figure}
\centering
\includegraphics[width=0.75\textwidth]{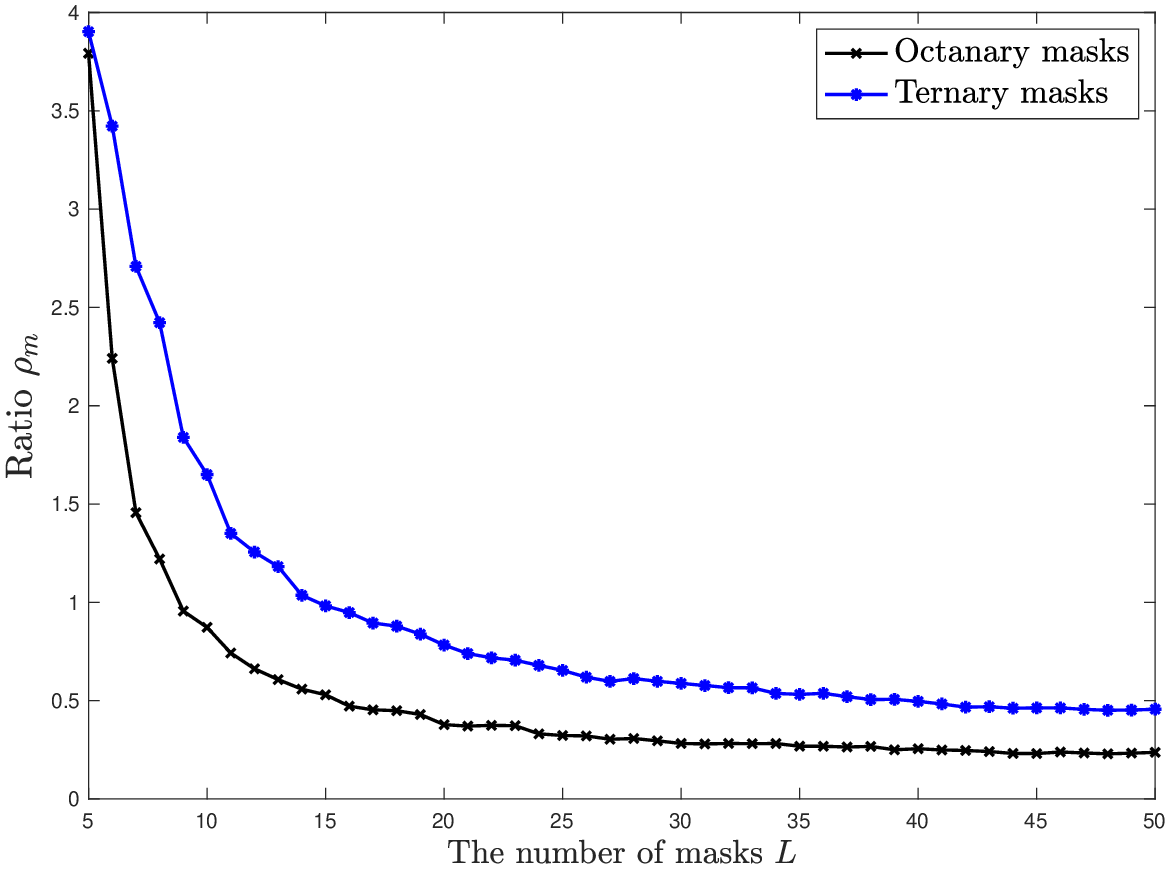}
\caption{The ratio $\rho_m$ versus the number of masks $L$ under adversarial noises with $n=128$.}
\label{figure:1}
\end{figure}

\section{Discussions} \label{sec:5}
This paper investigated the recovery bounds of PhaseLift programs for phase retrieval from coded diffraction patterns. Nearly sharp error bounds were established for both adversarial noise and zero-mean Gaussian noise cases in the large noise levels, utilizing descent cone analysis and exact dual certificates. These results partially answer the conjecture posed in \cite[p. 173]{Mahdithesis} up to a logarithmic factor.

Several interesting directions for future research remain. First, our error bounds in the small noise levels are sub-optimal. Refining these bounds to eliminate the  gap would be valuable. 
Second, phase retrieval under sparsity assumptions has garnered significant attention \cite{li2013sparse}, so establishing recovery bounds for PhaseLift in this context presents a promising avenue for further investigation.
Finally, several non-convex optimization algorithms have recently been proposed to solve phase retrieval from coded diffraction patterns, which are more efficiently than PhaseLift  for large-scale problems. Developing rigorous theoretical recovery guarantees for these estimators is an important direction for future work.

\appendix
\section{Proof of auxiliary lemmas} \label{sec:6}
\begin{proof}[Proof of Lemma \ref{le:appdual}]
The explicit construction of the approximate dual certificate is given in \cite[Proposition 18]{CDPGross}, and \eqref{eq:appdual1} is the direct consequences. To finish the proof, we only need to check \eqref{eq:appdual2}.  Specifically, the construction of $\Y$ 
given in \cite[Proposition 18]{CDPGross} is based on golfing scheme with expression as
\[
 \Y=\sum_{j=1}^{r+2} \xkh{ \mathcal R(\mQ_{j-1}) -\tr(\mQ_{j-1})I_n }.
\]
Here, 
\[
r=\lceil \frac12 \log_2 n \rceil +\lceil \log_2(M^2/\nu) \rceil +1,
\]
and the matrices $\mQ_j$ are defined iteratively as
\[
 \mQ_j=\mathcal P_T \xkh{\mQ_{j-1} + \tr(\mQ_{j-1}) \mQ_{j-1} - \mathcal R(\mQ_{j-1}) }, \quad \mQ_{0}=\mx_0,
\]
and $\mathcal R(\cdot)$ is an operate defied as
\[
\mathcal R(\mQ)=\frac{1}{\nu^2 n L} \sum_{l=1}^L \sum_{k=1}^n  \nj{ \md_l\vf_k \vf_k^* \md_l^* , \mQ } \md_l\vf_k \vf_k^* \md_l^* \1_{\abs{\nj{ \md_l\vf_k \vf_k^* \md_l^* , \mQ}} \le 4M^2 \gamma \log n \normf{\mQ}}
\]
with $\gamma \ge 1$ being a fixed constant.  Furthermore, as shown in \cite[Proposition 18]{CDPGross}, one has 
\begin{equation} \label{eq:normQ}
\norm{\mQ_0}=1,  \norm{\mQ_1} \le \frac{1}{\sqrt{2\log n}},  \norm{\mQ_2} \le \frac{1}{2\log n},  \norm{\mQ_j} \le  \frac{1}{\log n} 2^{-(j-1)}
\end{equation}
Based on the above construction and the expression of $\A^*$, each entry of the dual certificate $\lambda' \in \R^{nL}$ is
\[
\lambda'_{k,l} =\frac{1}{\nu^2 n L}  \sum_{j=1}^{r+2} \xkh{ \nj{ \md_l\vf_k \vf_k^* \md_l^* , \mQ_{j-1} }   \1_{ \abs{\nj{ \md_l\vf_k \vf_k^* \md_l^* , \mQ_{j-1}}} \le 4M^2 \gamma \log n \normf{\mQ_{j-1}}} - \tr(\mQ_{j-1}) }.
\]
Observing that each $\mQ_j$ is a Hermitian matrix and lies in the tangent space $T$ with $\rank(\mQ_j) \le 2$, simple calculation gives that 
\begin{eqnarray*}
\norms{\lambda}_{\infty}  & \le & \frac{1}{\nu^2 n L}  \sum_{j=1}^{r+2}  \xkh{4M^2 \gamma \log n \normf{\mQ_{j-1}} +  \norms{\mQ_{j-1}}_*  } \\
&\le &  \frac{4M^2 \gamma \log n + \sqrt2 }{\nu^2 n L}  \sum_{j=1}^{r+2}  \normf{\mQ_{j-1}} \\
&\le &  \frac{4M^2 \gamma \log n + \sqrt2 }{\nu^2 n L} \xkh{1+ \frac{1}{\sqrt{2\log n}} + \sum_{j=3}^{r+2}  \frac{1}{\log n} 2^{-(j-1)} } \\
&\le & \frac{c' \log n}{ nL},
\end{eqnarray*}
where the third inequality comes from \eqref{eq:normQ}.
This immediately gives
\[
\norm{\lambda} \le \frac{c' \log n}{ \sqrt{nL}}.
\]
\end{proof}

\bibliographystyle{plain}

\end{document}